\newcommand*\circled[1]{\tikz[baseline=(char.base)]{
            \node[shape=circle,draw,inner sep=1.5pt] (char) {#1};}}
\DeclareMathOperator{\arcsec}{arcsec}
\newtheorem{theorem}{Theorem}[section]
\newtheorem{lemma}[theorem]{Lemma}
\newtheorem{remark}[theorem]{Remark}
\newcommand{\eps}{\varepsilon}
\newcommand{\partialp}{\partial_+}
\newcommand{\partialm}{\partial_-}
\newcommand{\cp}{(c^2)'}
\newcommand{\Rzero}{{\mathcal{R}_0}}
\newcommand{\Rone}{{\mathcal{R}_1}}
\newcommand{\Rtwo}{{\mathcal{R}_2}}
\begin{document}
\title[Rarefaction wave for the nonlinear wave system]{Two dimensional Riemann problems for the nonlinear wave system: Rarefaction wave interactions}

	\author[Kim]{Eun Heui Kim}\thanks{The work of Kim was supported by the National Science Foundation under the Grants DMS-1109202, 1615266}
	\address[Kim]{Department of Mathematics and Statistics, California State University, Long Beach, CA 90840, USA.
	Email: EunHeui.Kim@csulb.edu
	}
		
	\author[Tsikkou]{Charis Tsikkou}\thanks{The work of Tsikkou was supported by the National Science Foundation under the Grant DMS-1400168}
	\address[Tsikkou]{Department of Mathematics, West Virginia University, Morgantown, WV 26506, USA.
	Email: tsikkou@math.wvu.edu}
	
\date{\today}

\maketitle

\begin{abstract}
We analyze rarefaction wave interactions of self-similar transonic irrotational flow in gas dynamics for the two dimensional Riemann problems. 
We establish the existence result of the supersonic solution to the prototype nonlinear wave system for the sectorial Riemann data, and study the formation of the sonic boundary and the transonic shock.
The transition from the sonic boundary to the shock boundary inherits at least two types of degeneracies (1) the system is sonic, 
and in addition (2) the angular derivative of the solution becomes zero where the sonic and shock boundaries meet.    

\noindent
Keywords: rarefaction wave; transonic shock; Riemann problem; multidimensional conservation laws; nonlinear wave system.

\noindent
AMS: {Primary: 76L05, 35L65; Secondary: 65M06, 35M33.}
\end{abstract}

\section{Introduction}\label{intro}
It is well known that the wave patterns created by initial value problems for the multidimensional compressible Euler system are complicated and challenging to study. As such, numerous works are focused on special initial data that allow us to reduce the space dimensions. 
In particular two-dimensional Riemann problems can be reduced to self-similar problems, which are interesting to study on their own. 
For instance, the flow in self-similar coordinates changes its type; it is supersonic (hyperbolic) in the far-field and becomes mixed near the origin.
A seminal work by Zhang and Zheng \cite{ZZ} illustrated various complicated wave patterns on four sectorial Riemann data for two-dimensional self-similar Euler systems.
Their conjectures are later validated numerically, see for example \cite{KT, LaxLiu, SR}. 
In particular Lax and Liu \cite{LaxLiu} attested the complicated wave patterns including Mach reflections, rolling up and instability of slip lines, and much more.  
They also noted that a general theory of Glimm type for one-dimensional problems would not be likely established for the multidimensional problems. 

While establishing a comprehensive analysis to understand the complicated wave patterns in two dimensional flows with Riemann data still has a long way to go,
there is recent progress describing the solution structures of transonic regular shock reflection problems for various systems,
\cite{CKK,CKK:WRR,CKKmixed, CKK:nlwe, Chen, ChenFang, EL, JKC, KimJDE10, KimCPDE, KimLeeNA13, Morawetz,serrefrei,sever, Zheng1}.
This is an incomplete list of related work specifically on transonic shocks by Riemann problems or by a wedge, and interested readers can refer to the references therein. 
On the other hand, relatively little is known about transonic rarefaction waves.
The computational result by Glimm {\it et al} \cite{Glimm} showed the shock formations from the rarefaction wave interactions.  

This paper addresses the understanding of rarefaction wave interactions and their shock formations in a specific self-similar problem, the nonlinear wave system in two space dimensions.
More precisely, the simple wave created by the planar rarefaction wave becomes a different wave with two families of non-trivial characteristics.
Both families of characteristics merge into becoming sonic near the origin and the type of the system then changes to subsonic, that is the characteristics no longer transport the data any further.
We call this wave a  {\em transient wave}.
On the other hand, there exists a family emanating from this transient wave region, becoming compressive and forming a shock downstream.
Thus the type changes and forms a sonic boundary in some part and a transonic shock in the other.

We note that a similar configuration was studied by \cite{SongZheng} for the pressure gradient system.
They \cite{SongZheng, Zhengbook} called the region -- where a family of characteristics starts on sonic curves and ends on transonic shock curves --  
the  ``semi-hyperbolic'' region. 
 \cite{SongZheng} established the existence of a local solution in a given semi-hyperbolic region, provided a smooth convex boundary and small Riemann data, and \cite{WangZheng} established the local regularity result for this solution. Our work is motivated by the work of \cite{Bang, DaiZhang, SongZheng, WangZheng}.

The nonlinear wave system, which can be considered as wave motions of shallow water and multidimensional $p$-systems, is a reduced system of the compressible Euler system for isentropic, irrotational flow in two space dimensions \cite{CKKmixed,CKK:nlwe}.
The nonlinear wave system can be also considered as a part of an operator splitting scheme in numerics, where the compressible Euler system can be split into the nonlinear wave system (the pressure system) and the pressure-less system (the gradient flow). In fact \cite{Zhengbook} noted that the Euler system can be split into the pressure-gradient system and the pressure-less system, see \cite{Song, Zheng1} and the references therein.
The pressure-gradient system is a special case of the nonlinear wave system.
The pressure-less system is well understood by \cite{SZ}.
Hence if one understands the solution structure of the nonlinear wave system, then one can construct the solution of the Euler system successively by using the splitting method.
Furthermore, there are many similarities on the structures of both the nonlinear wave system and the Euler system, see \cite{CKKmixed, serre}. 
As such, it is crucial to understand the nonlinear wave system in order to study the Euler system.

We focus on the wave patterns created by planar rarefaction waves.  
For the configuration, we impose four sectorial Riemann data $U_i$, $i=1,2,3,4$ for each $ith$ quadrant, see the left figure in Figure~\ref{fig_Rdata}.
\begin{figure}[t]
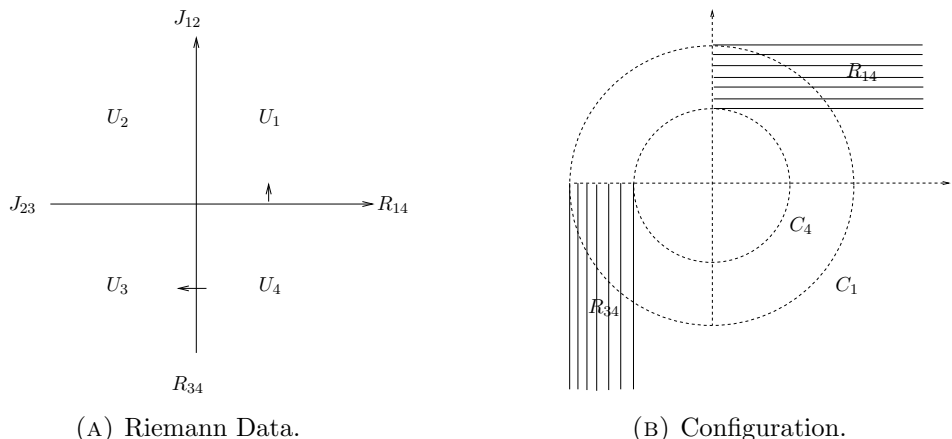

\begin{subfigure}[t]{0.3\textwidth}
\psfrag{1}[][][0.7][0]{$R_{14}$}
\psfrag{2}[][][0.7][0]{$R_{34}$}
\psfrag{3}[][][0.7][0]{$J_{12}$}
\psfrag{4}[][][0.7][0]{$J_{23}$}
\psfrag{5}[][][0.7][0]{$U_1$}
\psfrag{6}[][][0.7][0]{$U_4$}
\psfrag{7}[][][0.7][0]{$U_3$}
\psfrag{8}[][][0.7][0]{$U_2$}
\begin{center}
\includegraphics[height = 2in,width = 2in]{RiemannData.eps}
\end{center}
\caption{Riemann Data.}
\end{subfigure}
\hspace{1in}
\begin{subfigure}[t]{0.3\textwidth}
\psfrag{C_1}[][][0.7][0]{$C_1$}
\psfrag{C_2}[][][0.7][0]{$C_4$}
\psfrag{12}[][][0.7][0]{$R_{14}$}
\psfrag{13}[][][0.7][0]{$R_{34}$}
\begin{center}
\includegraphics[height = 2in,width = 2in]{Configuration_Dec.eps}
\end{center}
\caption{Configuration.}
\label{Configuration_Dec}
\end{subfigure}
\vspace*{-4mm}
\caption{Riemann data and configuration.}
\label{fig_Rdata}
\end{figure}
We consider planar waves with a horizontal rarefaction wave $R_{14}$ created by the constant data $U_1$ and $U_4$, and another vertical rarefaction wave $R_{34}$ created by $U_3$ and $U_4$. The contact discontinuities reside along the positive $y-$axis created by $U_1$ and $U_2$, and along the negative $x-$axis by $U_2$ and $U_3$, which have no effect on the system (the reduction of the system makes the contact discontinuities trivial), see the right figure in Figure~\ref{fig_Rdata}.
The data are symmetric with respect to $x=-y$ and thus it suffices to focus only on $R_{14}$.
Near the locus of the sonic circle (it is the origin for the nonlinear wave system) 
the type of the flow changes and becomes subsonic, creating a transonic shock downstream. 
We formulate the boundary value problem and establish the supersonic solution for the entire hyperbolic region of this configuration. 

Since the change of the type is not known a priori, the problem has two different types of free boundaries: the sonic boundary and the transonic shock boundary.
We show that the sonic boundary in this configuration inherits at least two types of degeneracies.


The first obvious one is that the wave across the sonic boundary becomes degenerate meaning it is neither hyperbolic nor  elliptic. 
The degeneracy of this type is well known as a Tricomi type problem, where the characteristics enter the sonic boundary perpendicularly.  
The Tricomi type degeneracy appears, for example, in the airflow over a wing where the steady subsonic flow creates a supersonic region over the convex surface of the wing creating a shock, see Courant and Friedrichs \cite{CF}.  This is a long standing open problem. 
 Numerical studies \cite{Tesdall1, Tesdall2} suggest that similar wave patterns are observed for the Mach reflection problem near the Mach stem. 
 We establish the existence of the supersonic solution, which becomes sonic, and that the solution and the sonic boundary are $C^1$.

The second one is when a family of characteristics creates a compression wave downstream, the sonic boundary and the transonic shock merge into a point,
and at that point, the angular derivative of the solution also disappears. To our knowledge, this is a new type of free boundary problem. 
We note that the Tricomi type degeneracy, since the characteristics enter the sonic boundary perpendicularly, implies that the sonic boundary is never the characteristics. Thus we utilize both the directional derivatives along the sonic boundary and the data transported along the characteristics to have the solution and the sonic boundary to be $C^1$. The solution may not be $C^1$ at the point where the sonic and shock boundaries meet, and at that point, the sonic boundary no longer has a Tricomi type degeneracy but the angular derivative of the solution becomes zero. Our results provide an insight to understand how the compressive wave is created by the expansion wave.  

For multidimensional conservation laws, the entropy conditions are insufficient to answer whether our solution is the physically relevant one. However, our solution captures the numerics, see Section~\ref{numerics}. 
This paper provides a framework to establish existence of the supersonic solution suggested by the numerics, and an analysis to understand how the type of the rarefaction wave changes. 
The complete analysis to construct the transonic wave in the entire region including the subsonic region will be discussed in our forthcoming paper.

 The main contributions of this paper are the following. 
 We first formulate the boundary value problem for the self-similar nonlinear wave system.
  We next discuss the wave patterns, monotonicity properties, regularity and existence results.
  The solution will be constructed locally and then assembling the pieces together along the characteristics and the sonic boundary. 
  We show that the sonic boundary created by the transient waves is $C^1$ and is strictly increasing radially.
This sonic boundary is terminated and radially tangential when it merges to the transonic shock downstream. 
Numerical results by using CLAWPACK for certain pressures are presented as well.

 We believe our results will serve as a vehicle for understanding transonic flows in particular the long standing open problem of the flow over the convex wing, and lead to further developments of systematic theories for multi-dimensional conservation laws. Interested readers can refer to the survey paper \cite{SChen10} for the comprehensive references and recent progress in transonic problems.

\section{Description of the problem}
\subsection{Nonlinear wave system: Configuration}
From the compressible Euler system for isentropic flow in two space dimensions,
ignoring the nonlinear velocity terms (assuming low velocities)
and assuming irrotational flow,
we can deduce a simpler system, the nonlinear wave system \cite{CKKmixed};
\begin{equation}\label{nlwesys}
\begin{array}{rcl}
\rho_t + (\rho u)_x + (\rho v)_y &=&0\\
(\rho u)_t+ p_x &=&0\\
(\rho v)_t +p_y &=&0.
\end{array}
\end{equation}
Here
 $\rho(t, x,y)$ is the density, $ u(t,x,y)$ and $v(t,x,y)$ are the $x$ and $y$ components of velocity, respectively, and $p(\rho)$ is the pressure satisfying a polytropic gas law
 \[\frac{dp}{d\rho}  = c^2(\rho) =k \gamma \rho^{\gamma-1}, \]
 with constants $k$ (we let $k=1$ for simplicity), $1<\gamma<\infty$, (typically $1<\gamma<2$, $\gamma=5/3$ is air),
 and a local sound speed $c^2(\rho)$.
 This system can be considered as wave motions of shallow water and multidimensional $p$-systems.
 
We let the momentum $(\rho u, \rho v) =(m,n)$ and use $U$ to denote  $(\rho, m,n)$: 
\[
U=(\rho, m,n)=(\rho, \rho u, \rho v). 
\]

Specifically we have the following Riemann data $U_i=(\rho_i, m_i, n_i)$ where $i=1,2,3,4$ for each quadrant satisfying;
\begin{eqnarray*}
R_{14} :& \rho_1> \rho_4, \ m_1=m_4, \ n_1- n_4= \Phi_{14}\\
J_{12} : &\rho_1=\rho_2, \ m_1=m_2, \ n_1 > n_2\\
J_{23}:  &\rho_2=\rho_3, \ m_2 > m_3, \ n_2=n_3\\
R_{34}: &\rho_3 > \rho_4, \ m_3 -m_4 = -\Phi_{34}, \  n_3=n_4,
\end{eqnarray*}
where
\begin{eqnarray*}
\Phi_{ij} &=& \int^{\rho_i}_{\rho_j} c(s) ds = \frac{2}{\gamma+1} \left( \rho_i^{\frac{\gamma+1}{2}} -   \rho_j^{\frac{\gamma+1}{2}} \right).
\end{eqnarray*} 
That is, 
\begin{eqnarray*}
\rho_1=\rho_2=\rho_3>\rho_4, \ m_1=m_2=m_4 > m_3, \ n_2=n_3=n_4 < n_1.
\end{eqnarray*}
Hence we impose four sectorial data;
\begin{eqnarray*}
U_1&=& (\rho_1, 0, \Phi_{14})^t\\
U_2 &=& (\rho_1, 0,0)^t\\
U_3&=& (\rho_1, -\Phi_{14}, 0)^t\\
U_4&=& (\rho_4,0,0)^t.
\end{eqnarray*}
We let $c_1=c(\rho_1)$ and $c_4=c(\rho_4)$ and denote two sonic circles $C_1=\{(c_1,\theta), 0\le \theta\le 2\pi\}$ and $C_4=\{ (c_4,\theta),0\le \theta\le 2\pi\}$ corresponding to the Riemann data $\rho_1>\rho_4$.

We replace $c^2(\rho)= \gamma p^\kappa$, $\kappa = (\gamma -1)/\gamma$, and write the system \eqref{nlwesys} in self-similar coordinates $\xi=x/t, \eta=y/t$;
 \begin{equation}\label{selfsys}
\begin{array}{rcl}
-\frac{1}{\gamma} p^{-\kappa} (\xi p_\xi  + \eta p _\eta) + m_\xi + n_\eta &=& 0,\\
-\xi m_\xi -\eta m_\eta+  p_\xi  &=& 0,\\
-\xi n_\xi -\eta n_\eta  + p_\eta &=& 0.
\end{array}
\end{equation}
The system can be written in a second order equation 
(by applying $\partial_\xi$ to the second equation and $\partial_\eta$ to the third equation in \eqref{selfsys} and replacing the momentum terms with their derivatives from the first equation in \eqref{selfsys})
\begin{equation}\label{Pself}
p_{\xi\xi} + p_{\eta\eta}=\frac{1}{\gamma}p^{-\kappa}(\xi p_{\xi}+\eta p_{\eta})+\xi \bigg(\frac{1}{\gamma} p^{-\kappa} (\xi p_\xi  + \eta p _\eta)\bigg)_\xi + \eta \bigg(\frac{1}{\gamma} p^{-\kappa} (\xi p_\xi  + \eta p _\eta)\bigg)_\eta,
\end{equation}
which can be written in polar coordinates $r^2=\xi^2 +\eta^2,\theta = \tan \eta/\xi$:
\begin{eqnarray}\label{Pnd}
r^2 \bigg(1 -\frac{r^2}{c^2} \bigg)  p_{rr} +  p_{\theta\theta} +  r(1-2\frac{r^2}{c^2}) p_r + \kappa \frac{r^2}{c^2} \frac{r^2}{p} p^2_{r} =0.
\end{eqnarray}
The system is hyperbolic (supersonic) when $c^2 <r^2$, sonic when $c^2 =r^2$, and elliptic (subsonic) when $c^2 >r^2$.

In the following section we discuss the characteristic equations for the system in the supersonic region.
\subsection{Characteristic equations in the supersonic region}
When the state is hyperbolic, that is $c^2 <r^2$, we let 
\begin{eqnarray*}
\lambda_\pm &=& \pm r\sqrt{\frac{r^2 -c^2}{c^2}}, \quad {\rm or \ simply} \quad \lambda = r\sqrt{\frac{r^2 -c^2}{c^2}},
\end{eqnarray*}
and the positive and negative characteristics derived by integrating $\frac{dr}{d\theta} =\lambda_{\pm}=\pm\lambda,$ respectively, 
so that \eqref{Pnd} reads  
\begin{eqnarray}\label{Pchar}
p_{\theta\theta} - \lambda^{2} p_{rr} &=& -\frac{r}{c^2}(c^2 -2r^2)p_r -\frac{\kappa r^4}{c^2 p} p_r^2.
\end{eqnarray}
In addition, by letting $\partial_{\pm} =\partial_\theta \pm \lambda \partial_r$, we have, as in \cite{SongZheng},
\begin{eqnarray*}
\partial_-\partial_+ p &=& \frak{h}(\partial_- p -\partial_+ p)\partial_+ p\\
\partial_+\partial_- p &=& \frak{h}(\partial_+ p -\partial_- p)\partial_- p,
\end{eqnarray*}
where
\begin{eqnarray*}
\frak{h}&=&\frac{r^2 \cp}{4c^2(r^2 -c^2)} =\frac{r^4\cp}{4c^4\lambda^2},  \quad \cp=\frac{dc^2}{dp} =\gamma  \kappa p^{\kappa -1} = (\gamma-1)p^{\kappa -1}.
\end{eqnarray*}
We denote 
\begin{eqnarray*}
R=\partial_+ p, && S=\partial_-p
\end{eqnarray*}
so that
\begin{eqnarray*}
p_\theta = \frac{1}{2}(R+S), && p_r =\frac{1}{2\lambda } (R-S),
\end{eqnarray*}
and
\begin{eqnarray*}
\partialm R &=& \frak{h}(S-R)R\\
\partialp S &=& \frak{h}(R-S)S,
\end{eqnarray*}
which can be written as
\begin{eqnarray}\label{pRS}
\left( \begin{array}{c}
p \\ R \\ S
\end{array}
\right)_\theta 
+ \left( \begin{array}{ccc}
0 & 0& 0\\ 0 & -\lambda &0 \\ 0&0&\lambda
\end{array}
\right) \left( \begin{array}{c}
p \\ R \\ S
\end{array}
\right)_r &=& 
\left( \begin{array}{c}
\frac{1}{2}(R+S) \\ \frak{h}(S-R)R \\ \frak{h}(R-S)S
\end{array}
\right).
\end{eqnarray}
In the following section, we discuss and formulate the boundary value problem for the system.

\section{Derivations of the boundary value problem and the main result}

\begin{figure}[ht]
\psfrag{A}[][][0.7][0]{$\Xi_1$}
\psfrag{B}[][][0.7][0]{$\Xi_2$}
\psfrag{C}[][][0.7][0]{$\Xi_3$}
\psfrag{C_1}[][][0.7][0]{$C_1$}
\psfrag{C_2}[][][0.7][0]{$C_4$}
\psfrag{2}[][][0.7][0]{$\xi$}
\psfrag{1}[][][0.7][0]{$\eta$}
\psfrag{5}[][][0.7][0]{$\Sigma$}
\psfrag{6}[][][0.7][0]{$\Gamma$-}
\psfrag{7}[l][][0.7][0]{$\Gamma_{12}$}
\psfrag{3}[][][0.7][0]{$\sigma$}
\psfrag{11}[][][0.7][0]{$\Sigma_{34}$}
\psfrag{12}[][][0.7][0]{$R_{14}$}
\psfrag{13}[][][0.7][0]{$U_1$}
\psfrag{14}[][][0.7][0]{$U_4$}
\psfrag{4}[][][0.7][0]{$\sigma_1$}
\psfrag{8}[][][0.7][0]{$\Gamma_{23}$}
\psfrag{9}[][][0.7][0]{$\Gamma_{24}$}
\psfrag{10}[][][0.7][0]{$\Xi_4$}
\psfrag{15}[][][0.7][0]{$\eta=-\xi$}
\psfrag{16}[][][0.7][0]{$\Sigma_0$}
\psfrag{17}[][][0.7][0]{$\mathcal{R}_0$}
\psfrag{18}[][][0.7][0]{$\mathcal{R}_1$}
\psfrag{19}[][][0.7][0]{$\mathcal{R}_2$}
\begin{center}
\includegraphics[height = 3.3in,width = 4in]{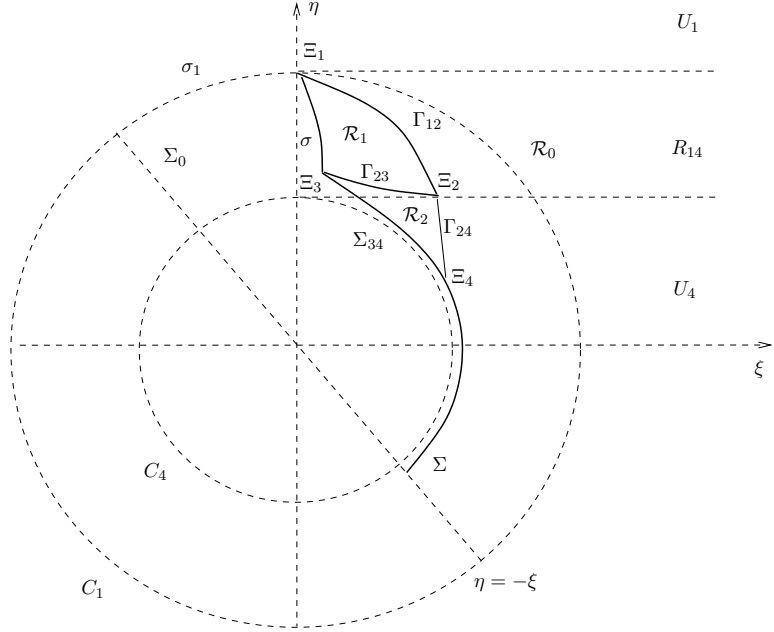}
\end{center}
\vspace*{-4mm}
\caption{Configuration with details.}
\label{fig_config}
\end{figure}

When the rarefaction wave $R_{14}: c^2(p)=\eta^2$, where $p_4\le \eta \le p_1$, enters the sonic circle $C_1$, a new wave pattern is created. 
Specifically, the wave pattern changes from the simple wave to two wave families of non-constant solutions, and these two families of characteristics merge and become sonic near the positive $\eta$-axis. 
We denote the sonic boundary created by these two families of characteristics by $\sigma.$
In addition, there exists a wave family connecting the sonic boundary and the transonic shock boundary.
The transient wave region (consists of two wave families of non-constant solutions) is enclosed by two limiting characteristics and the sonic boundary. 
More precisely,
let $\Gamma_{12}$ be the positive characteristic emanating from $\Xi_1=(r_1,\pi/2)$ to $\Xi_2=(r_2, \theta_2)$, which separates the simple wave $\mathcal{R}_0=\{c^2(p)=\eta^2\}$ and the transient wave.  $\Gamma_{12}$ is completely determined by the simple wave $\mathcal{R}_0$.
$\Xi_2=(r_2, \theta_2)$ is the point at which this characteristic crosses $\eta=c_4$, 
where $r_2=c_4/\sin\theta_2$ and $\theta_2= \sin^{-1}\sqrt{c_4/c_1}$. 
Let $\Gamma_{23}$ be the negative characteristic emanating from $\Xi_2$ to $\Xi_3$ where $\Xi_3=(r_3,\theta_3)$ is 
the point at which the characteristic speed becomes zero, that is, a sonic point.
The transient wave region is the hyperbolic region enclosed by characteristics $\Gamma_{12}$ and $\Gamma_{23}$, and the sonic boundary $\sigma$.

On the other hand, there exists yet another new simple wave created downstream, which is adjacent to the constant state $p=p_4$.
More precisely, when the governing hyperbolic system is reducible to a first order homogeneous system, the new state adjacent to the constant state forms a simple wave, see Courant and Friedrichs \cite{CF}.  
We refer to Dafermos \cite{Dafermos} for details of the general framework of the simple wave.
This simple wave becomes compressive and creates a transonic shock downstream.

In summary, the transonic boundary is created by the transient wave (the sonic boundary) in part, and the simple wave (the shock boundary) in the other.
Hence it is crucial to identify the corresponding characteristics first to separate these different wave regions, in order to formulate the correct boundary problems.

Below we list the notation used for the characteristics and the corner points that we just discussed.
Let
\begin{eqnarray*}
\Gamma_{12} &=& \{ (r,\theta): r=c_1\sin\theta, \ \theta_2\le \theta \le \pi/2\}\\
\Gamma_{23} &=& \{(r,\theta): r=r_{23}(\theta), \ \theta_2\le \theta \le \theta_3\},
\end{eqnarray*}
and 
\begin{eqnarray*}
\Xi_1&=&\left(c_1, \frac{\pi}{2} \right)\\
\Xi_2 &=& \left(\frac{c_1}{\sin\theta_2}, \sin^{-1} \sqrt{\frac{c_4}{c_1}} \right) = \left(\sqrt{c_1c_4},  \sin^{-1} \sqrt{\frac{c_4}{c_1}} \right) \\
\Xi_3 &=& (r_3,\theta_3).
\end{eqnarray*}

The supersonic region  is divided into three regions based on the characteristics where $R=\partial_+p$ and $S=\partial_-p$ are either strictly positive or zero, and denoted by;
\begin{itemize}
\item Rarefaction wave region $\Rzero$: $R>0$ and $S=0$, see Section~\ref{sectionR_0}. 
\item Transient wave region $\mathcal{R}_1$: Both families $R$ and $S$ are non-trivial, enclosed by the positive characteristic $\Gamma_{12}$ emanating from $\Xi_1$ to $\Xi_2$; the negative characteristic $\Gamma_{23}$ from $\Xi_2$ to $\Xi_3$; and the sonic boundary $\sigma$ on which $R=S$. See Sections ~\ref{sectionR_1} and \ref{sectionR_1existence}. 
\item Simple wave region $\mathcal{R}_2$:   $R=0$ and $S>0,$ see Section~\ref{sectionR_2}. 
The boundary of this region consists of  the negative characteristic  $\Gamma_{23}$ emanating from $\Xi_2$ to $\Xi_3$; 
the positive characteristic, denoted by $\Gamma_{24}=\{ (r_{24}(\theta),\theta)\}$, emanating from $\Xi_2$ to the shock $\Sigma$ at the point $\Xi_4$, 
where $r_{24}(\theta)=c(p_4)\sec\bigg(\theta+\arcsec\sqrt{\frac{c(p_1)}{c(p_4)}}-\arcsin\sqrt{\frac{c(p_4)}{c(p_1)}}\bigg)$; and a part of the shock $\Sigma'\subset \Sigma$ from $\Xi_3$ to $\Xi_4$. 
\end{itemize}

In region $\mathcal{R}_1$, we have the following Goursat boundary value problems: 
\begin{eqnarray}\label{eqp}
p_\theta &=& \frac{1}{2}(R+S), \quad p \mid_{\Gamma_{12}}= \gamma^{-1/\kappa} \eta^{2/\kappa}, \quad p\mid_{\Gamma_{23}}= z, \\
\label{eqR} \partialm R &=& \frak{h}(S-R)R, \quad R \mid_{\Gamma_{12}}=  \gamma^{-1/\kappa} \partialp \eta^{2/\kappa}, \quad R\mid_{\Gamma_{23}}= \partialp z,\\
\label{eqS} \partialp S &=& \frak{h}(R-S)S, \quad S \mid_{\Gamma_{12}}= \gamma^{-1/\kappa} \partialm \eta^{2/\kappa}, \quad S\mid_{\Gamma_{23}}= \partialm z,
\end{eqnarray}
where
\begin{eqnarray*}
R \mid_{\Gamma_{12}}=  \gamma^{-1/\kappa} \partialp \eta^{2/\kappa} = \frac{4{\lambda_+}p}{\kappa r}, \quad S \mid_{\Gamma_{12}}= \gamma^{-1/\kappa} \partialm \eta^{2/\kappa} = 0\\
S \mid_{\Gamma_{23}}= \partialm z = z_\theta - z_r r\sqrt{\frac{r^2- c^2}{c^2}} \ge 0, \quad R \mid_{\Gamma_{23}}=\partialp z =  z_\theta+  z_r r\sqrt{\frac{r^2- c^2}{c^2}}.
\end{eqnarray*}
We discuss the compatibility conditions in Section~\ref{sectionR_1}.

In $\mathcal{R}_2$, the positive characteristics, emanating from $\Gamma_{23}$, form an envelope, 
where the positive characteristics satisfy
\begin{eqnarray}\label{pchar}
 \frac{dr}{d\theta} =  \lambda = r\sqrt{\frac{r^2 -c^2(p)}{c^2(p)}} ,\quad r(\theta_i)=r_{23}(\theta_i), 
 \end{eqnarray}
 for each $( r_{23}(\theta_i),\theta_i)\in \Gamma_{23}, \ \theta_2\le \theta_i \le\theta_3,  $ 
and $r=r_{23}(\theta)$ satisfies 
\begin{equation}\label{nch}
\frac{dr}{d\theta} = - \lambda = -r\sqrt{\frac{r^2 -c^2(p)}{c^2(p)}} , \quad r(\theta_2)=c_4/\sin\theta_2, \quad \theta_2<\theta<\theta_3.    
\end{equation}

\begin{remark}
We note that the simple wave does not create a sonic curve. More precisely if it is sonic somewhere, that is $\lambda=0$, in the simple wave region. Then  $S=\partial_-p = p_\theta -\lambda p_r$ and $R=\partial_+p =p_\theta +\lambda p_r$ imply $p_\theta =S=R$, which yields a contradiction: simple wave.
\end{remark}

The family $S>0$ remains to be non-trivial and this is the one that carries the data from $\sigma$ to $\Gamma_{23}$ between the two regions $\mathcal{R}_1$ and $\mathcal{R}_2$.
The other family $R$ becomes zero, and therefore carries no information in this simple wave region $\mathcal{R}_2$. Thus we have
\begin{equation}\label{Simple}
\partial_+ S = -\frak{h} S^2, \quad S\mid_{\Gamma_{23}} = \partial_- z.
\end{equation}
We can write the solution to this Riccati type equation in the following form: Let $(\hat\theta,\hat r)$ a point on the negative characteristic $\Gamma_{23}$ and integrate \eqref{Simple} along the positive characteristic lines in region $\mathcal{R}_2$, then 
\begin{equation}\label{Ssimple}
S=\partial_{-}p(\theta,r)=\frac{\partial_{-}p(\hat \theta,\hat r)}{\partial_{-}p(\hat \theta,\hat r)\displaystyle\int_{\hat \theta}^\theta \frak{h}\ d_{+}\theta+1}.
\end{equation}

We finally state the main theorem. 
\begin{theorem}\label{mainth}
Let $2c(p_4)>c(p_1)$. For a convex $\Gamma_{23}\in C^2$ and the data $(p, R, S) \in C^2(\Gamma_{23})$  satisfying the compatibility conditions at $\Xi_2$ and $\Xi_3$ and on $\Gamma_{23}$, 
there exists a supersonic solution $(p,R, S) \in C^2(\Rone\setminus\Gamma_{23}\cup\sigma) \cap C^1(\Rone\cup\sigma) \cap C^{0,1}(\overline \Rone\setminus\{\Xi_3\})$ satisfying the Goursat boundary problems \eqref{eqp}-\eqref{eqS}.
The solution $(p,R,S)$ creates the sonic boundary $\sigma=\{(\tau(\theta),\theta), \theta_3 <\theta<\pi/2\} \in C^1$
such that $R=S=p_\theta >0$ on $\sigma$ and $\tau'(\theta)>0$ on $\sigma$.
$\tau'(\theta_3)=0$ and $\tau'(\pi/2)=0$.

The sonic boundary $\sigma$ and the transonic shock $\Sigma'$ merge into a point $\Xi_3$ at which the solution $(p,R,S)$ holds 
\begin{equation}
 R(\Xi)=S(\Xi) \rightarrow 0, \quad as \quad \Xi \in \sigma \rightarrow \Xi_3. 
\end{equation}

Furthermore there exists a simple wave creating a transonic shock $\Sigma'$ in region $\Rtwo$. 
\end{theorem}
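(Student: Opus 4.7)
The proof splits into three stages matching the three conclusions: (a) solve the Goursat problem \eqref{eqp}--\eqref{eqS} in $\Rone$ up to the first locus where $R=S$ is attained; (b) identify this locus with $\sigma$, prove it is strictly increasing and $C^1$, and analyze the endpoint behavior $\tau'(\pi/2)=\tau'(\theta_3)=0$; (c) construct the simple wave in $\Rtwo$ via \eqref{Ssimple} and locate the transonic shock $\Sigma'$ by Rankine--Hugoniot. The main obstacle will be stage (b), where the sonic degeneracy $\lambda\to 0$ at $\sigma$ and the additional vanishing $p_\theta\to 0$ at the corner $\Xi_3$ must be reconciled.

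For (a), the plan is to introduce characteristic coordinates $(\alpha,\beta)$ in which $\Gamma_{12}$ and $\Gamma_{23}$ are the coordinate axes, recast \eqref{eqp}--\eqref{eqS} as an integral system, and run a Picard iteration as in \cite{SongZheng}. The monotonicity engine is supplied by \eqref{eqR}--\eqref{eqS}: since $\mathfrak{h}>0$, along a positive characteristic $R$ is driven toward $S$ from above while $S$ is driven toward $R$ from below along a negative characteristic, so $R-S$ is monotone decreasing in the characteristic flow. Combined with the initial data $R|_{\Gamma_{12}}>0$, $S|_{\Gamma_{12}}=0$, and the nonnegative Goursat data on $\Gamma_{23}$, this yields uniform bounds $0\le S\le R\le \|R|_{\Gamma_{12}}\|_\infty$, hence an a priori bound on $p$ and thus on $\lambda=r\sqrt{(r^2-c^2)/c^2}$. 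The iteration is continued along each characteristic as long as $\lambda>0$, and stops on the set $\sigma=\{r=c(p)\}$ where hyperbolicity is lost. On $\sigma$, boundedness of $p_r=(R-S)/(2\lambda)$ forces $R-S=O(\lambda)$, and the monotone nonnegative quantity $R-S$ must therefore vanish exactly there, giving $R=S=p_\theta>0$ on $\sigma$.

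For (b), the sonic curve satisfies $\tau(\theta)=c(p(\tau(\theta),\theta))$, so wherever $\sigma$ is smooth,
\[
\tau'(\theta)=\frac{\cp\,p_\theta}{2c-\cp\,p_r}\bigg|_{\sigma}.
\]
Because characteristics hit $\sigma$ perpendicularly (the Tricomi-type degeneracy), $p_r$ cannot be recovered from transported data alone. The plan is to combine the tangential ODE obtained by differentiating the relation $R-S=0$ along $\sigma$ with the two transport equations \eqref{eqR}-\eqref{eqS} in the limit $\lambda\to 0^+$, in the spirit of \cite{Bang, WangZheng}, to express $\tau'$ as a limit of $\partial_\theta(R-S)/\partial_r(R-S)$; this gives $\tau'>0$ and continuous, hence $\sigma\in C^1$. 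At $\theta=\pi/2$ the data on $\Gamma_{12}$ are symmetric across the $\eta$-axis, forcing $p_\theta\to 0$ on approach and thus $\tau'(\pi/2)=0$. At $\theta=\theta_3$, the matching with $\Rtwo$ forces $R,S\to 0$, so the numerator $p_\theta=(R+S)/2\to 0$ and $\tau'(\theta_3)=0$ as well, which simultaneously certifies the tangential meeting of $\sigma$ with $\Sigma'$ at $\Xi_3$.

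For (c), in $\Rtwo$ one has $R\equiv 0$ along each positive characteristic, so \eqref{Simple} integrates explicitly to the Riccati formula \eqref{Ssikmple}\nobreak\ifnum0=1\fi \eqref{Ssimple}, which together with $p_\theta=S/2$ and $p_r=-S/(2\lambda)$ reconstructs $p$; integrating \eqref{pchar} with $p\equiv p_4$ on the far side yields the explicit straight positive characteristics $r_{24}(\theta)$ stated in the definition of $\Rtwo$, and their envelope forces the formation of $\Sigma'$, located by Rankine--Hugoniot between the simple-wave state and the constant state $U_4$. The hardest step is the analysis at $\Xi_3$: there the two degeneracies occur together, so recovering the $C^{0,1}$ (rather than $C^1$) regularity up to $\Xi_3$ requires matching the rates at which $R-S\to 0$ (sonic) and $R+S\to 0$ (angular), which is the precise new phenomenon the theorem records and which must be controlled quantitatively in the Picard iteration of stage (a).
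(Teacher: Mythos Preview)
Your overall architecture matches the paper's: solve the Goursat problem in $\Rone$, analyze the sonic boundary $\sigma$, then construct the simple wave in $\Rtwo$. However, stage (b) as written has a genuine gap, and one of your endpoint arguments is wrong.

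\textbf{The circularity at $\sigma$.} You write that ``boundedness of $p_r=(R-S)/(2\lambda)$ forces $R-S=O(\lambda)$'' on approach to $\sigma$. But boundedness of $p_r$ up to the degenerate set $\{\lambda=0\}$ is precisely what must be \emph{proved}; it is not delivered by a Picard iteration that is continued ``as long as $\lambda>0$''. Your plan to ``express $\tau'$ as a limit of $\partial_\theta(R-S)/\partial_r(R-S)$'' does not address this: both numerator and denominator involve second derivatives of $p$ that can blow up as $\lambda\to0$. The paper closes this gap by the change of variables $t=\sqrt{r^2-c^2(p)}$ (flattening $\sigma$ to $\{t=0\}$), deriving transport equations for $G=-2t^2 e(r,t)R_r$, $H=-2t^2 e(r,t)S_r$, and $V=1/S-1/R$, and proving the weighted estimates $t^{\beta}|R_r|,\,t^{\beta}|S_r|\le C$ for some $1<\beta<2$. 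These feed back into an ODE for $V/t$ and yield the crucial fact that $(R-S)/t$ is uniformly bounded and uniformly continuous up to $t=0$; only then is $p_r$ continuous on $\overline{\Rone\setminus\Rone[\delta]}$ and $\sigma\in C^1$. Your sign formula $\tau'=\cp\,p_\theta/(2c-\cp\,p_r)$ is correct, but to use it you also need the denominator to be nonzero on $\sigma$; the paper obtains this separately (showing $\cp\,p_\eta<2\eta$ and $\cp\,p_\xi<2\xi$ along $\sigma$) rather than from the transport equations alone.

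\textbf{The endpoint $\tau'(\pi/2)=0$.} Your reason, ``the data on $\Gamma_{12}$ are symmetric across the $\eta$-axis'', is false: the only symmetry in the configuration is across $\eta=-\xi$. The correct reason is direct from \eqref{Rrare}: $R|_{\Gamma_{12}}=\frac{4}{\kappa\gamma^{1/\kappa}}\cos\theta(\sin\theta)^{2/\kappa-1}r^{2/\kappa}$ vanishes at $\theta=\pi/2$, and $S|_{\Gamma_{12}}=0$, so $p_\theta(\Xi_1)=0$, whence $\tau'(\pi/2)=0$ by the formula above. Separately, your claim of a global ordering $0\le S\le R$ from the transport equations is unjustified (the two quantities evolve along \emph{different} characteristic families, so ``$R-S$ is monotone decreasing in the characteristic flow'' is not a well-posed statement); the paper instead bounds $R,S$ by showing their maxima are attained on $\Gamma_{12}\cup\Gamma_{23}$, and proves $R,S>0$ in $\Rone$ by the integral representations \eqref{R_positive}--\eqref{S_positive}. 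Finally, reaching $\sigma$ at all requires a continuation argument: the paper does this via the level sets $\{r^2-c^2(p)=d\}$ and shows $\inf\{d:\text{solution exists down to }d\}=0$, which is what your ``iteration stops on $\sigma$'' is implicitly assuming.
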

Our existence result of the supersonic flow is established with the given convex negative characteristics $\Gamma_{23}$ and the data on $\Gamma_{23}$ holding the compatibility conditions. 
In our forthcoming paper, we establish the global transonic solution and provide the scheme to select the correct data and $\Gamma_{23}$. 
 
 In what follows, we use the condition $2c(p_4)>c(p_1)$ and discuss the existence results for each region.
\section{Rarefaction wave Region $\mathcal{R}_0$}
\label{sectionR_0}
With $c^2(p)=\gamma p^\kappa = \eta^2$, where $p_4\le p\le p_1$, it is easy to see that
$$p=\frac{1}{\gamma^{1/\kappa}}r^{2/\kappa}(\sin \theta)^{2/\kappa}$$ and 
\begin{eqnarray*}
\lambda = r\sqrt{\frac{r^2-c^2}{c^2}}=r\sqrt{\frac{\xi^2+\eta^2-c^2}{c^2}}=\frac{r\xi}{\eta}=\frac{r\cos \theta}{\sin \theta}.
\end{eqnarray*}
By integrating along the positive characteristic $\Gamma_{12}$ emanating from $\Xi_1,$ $(r_{1},\theta_{1})=(c(p_1),\pi/2)$,
\begin{align*}
\frac{dr}{d\theta}=\lambda =\frac{r\cos \theta}{\sin \theta},
\end{align*}
we find that
\begin{equation}\label{G12}
\Gamma_{12}: \ \ r=c(p_1)\sin \theta, \quad \frac{dr}{d\theta} = c(p_1) \cos\theta =c(p_1)\sqrt{1-\frac{r^2}{c^2(p_1)}}.
\end{equation}
Note that $\Gamma_{12}$ terminates at $\eta=r\sin\theta=c(p_4)$, and thus
$$(r_{2},\theta_{2})= \bigg(\sqrt{c(p_1)c(p_4)}, \arcsin\sqrt{\frac{c(p_4)}{c(p_1)}}\bigg).$$
Hence $\Gamma_{12}$ is completely determined by the rarefaction wave $R_{14}: c^2(p)=\eta^2$, 
where $p_4\le p\le p_1$.
In addition, in region $\mathcal{R}_0$ we have
\begin{align}
R&= \partial_+p= \partial_\theta p +\lambda\partial_r p=\frac{4}{\kappa\gamma^{1/\kappa}}\cos\theta(\sin \theta)^{2/\kappa-1}r^{2/\kappa}>0, \quad \theta\in(\theta_{2}, \pi/2)\label{Rrare}\\
S&=\partial_-p=\partial_\theta p -\lambda\partial_r p =\frac{2}{\kappa}\frac{r^{2/\kappa}}{\gamma^{1/\kappa}}(\sin \theta)^{2/\kappa}\left[\frac{\cos \theta}{\sin \theta}-\sqrt{\frac{r^2\cos^2 \theta}{r^2 \sin^2 \theta}}\right]=0, \quad \theta\in [\theta_{2}, \pi/2].
\end{align}
Note that $R=0$ when $\theta=\pi/2$.

\section{Transient wave Region $\mathcal{R}_1$}
\label{sectionR_1}
We first discuss many useful properties of the characteristics in the transient wave region $\Rone$, in the same spirit as in \cite{Bang}.
More precisely we discuss the monotonicity and convexity properties of the characteristics, and the monotonicity of $p$ along the characteristics in polar coordinates and cartesian coordinates (different coordinates provide different aspects of the characteristics) in Lemmas~\ref{Lemma1_increasing} --\ref{intersectionpoint}.
We also state a priori bounds at the end of the section, in Lemmas~\ref{apriori}, \ref{apest}. 
\begin{lemma}
\label{Lemma1_increasing}
The hyperbolic solution $p \in C^1$ to the Goursat problem satisfies  $$S=\partial_{-}p>0, \ \ R=\partial_{+}p>0 \ \ \text{in the interior of region} \ \ \mathcal{R}_1.$$
Hence $p_\theta = (R+S)/2>0$ in $\mathcal{R}_1$.
\end{lemma}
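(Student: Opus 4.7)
The plan is to exploit the multiplicative structure of \eqref{eqR} and \eqref{eqS}: along the appropriate characteristic, each of $R$ and $S$ satisfies an ODE of the form $\dot y = a(\theta)\,y$, so its sign propagates from the initial curve. Combined with positive boundary data on $\Gamma_{12}$ and $\Gamma_{23}$, this forces $R,S>0$ in the interior, and then $p_\theta=(R+S)/2>0$ is immediate.

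First I would parameterize a negative characteristic by $\theta$ and rewrite \eqref{eqR} as $dR/d\theta = \mathfrak{h}(S-R)\,R$. Since $(p,R,S)$ is $C^1$ and the traced segment stays away from the sonic set (where $\mathfrak{h}$ would blow up through $\lambda^2$ in the denominator), the coefficient $a(\theta):=\mathfrak{h}(S-R)$ is continuous along the characteristic, so
\[
R(\theta) = R(\theta_0)\exp\!\left(\int_{\theta_0}^\theta a(\theta')\,d\theta'\right),
\]
and the sign of $R$ is preserved along every negative characteristic segment in $\mathcal{R}_1$. The symmetric computation with \eqref{eqS} along positive characteristics preserves the sign of $S$. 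The boundary data has the required signs: formula \eqref{Rrare} yields $R=\frac{4\lambda p}{\kappa r}>0$ on the interior of $\Gamma_{12}$ (and $R>0$ even at the corner $\Xi_2$ because $\cos\theta_2>0$), while the Goursat compatibility on $\Gamma_{23}$ together with the strict monotonicity of the transported quantity $z$ gives $S=\partial_- z>0$ on the interior of $\Gamma_{23}$.

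Finally, for any interior $P\in\mathcal{R}_1$ I would trace both characteristic families backward. Because $\Gamma_{12}$ is itself a positive characteristic and $\Gamma_{23}$ a negative one, the non-crossing property for same-family characteristics in a smooth hyperbolic solution forces the backward negative characteristic from $P$ (on which $r$ increases as $\theta$ decreases) to land on $\Gamma_{12}$, and the backward positive characteristic (on which $r$ decreases as $\theta$ decreases) to land on the interior of $\Gamma_{23}$. Applying sign-preservation to the positive boundary values identified above yields $R(P)>0$ and $S(P)>0$.

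The main obstacle is the geometric step of ruling out that the backward positive characteristic from an interior point terminates at the corner $\Xi_3$ (where $S=0$) or asymptotes to the sonic curve $\sigma$ before reaching $\Gamma_{23}$. This requires the structural fact that $\lambda\to 0$ on $\sigma$, so that characteristics become tangential there and cannot be crossed transversally from the interior backward, together with the uniqueness of characteristics (which pins down the positive characteristic through $\Xi_2$ as precisely $\Gamma_{12}$, and makes the characteristic through $\Xi_3$ a single degenerate curve). Once this geometric picture is nailed down (presumably via the monotonicity and convexity lemmas in the same section), the ODE argument closes the proof immediately.
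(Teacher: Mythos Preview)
Your integral/exponential formulas and the treatment of $R$ are exactly the paper's argument. The real divergence is in how you get $S>0$. You invoke the boundary assumption $S|_{\Gamma_{23}}=\partial_- z>0$ (essentially condition~\eqref{z23}) as an input. The paper does \emph{not} assume this: it only uses $R>0$ from $\Gamma_{12}$ and the structural fact that $R=S$ on $\sigma$, and then argues by contradiction. If $S=0$ at a point of $\Gamma_{23}$, the exponential formula propagates $S\equiv 0$ along the positive characteristic up to $\sigma$, forcing $R=S=0$ there, contradicting $R>0$; if $S<0$, one gets $R=S<0$ on $\sigma$, same contradiction. So the paper \emph{derives} $S>0$ on $\Gamma_{23}$ as a necessary feature of any $C^1$ solution with a sonic boundary, whereas your route merely checks consistency with an externally imposed datum. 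Your version is shorter; the paper's is more intrinsic and explains why the sign condition on $\Gamma_{23}$ is not optional.

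Your last paragraph about backward characteristics possibly asymptoting to $\sigma$ or hitting $\Xi_3$ is a detour. Both proofs are a~priori arguments on a given $C^1$ solution in $\mathcal{R}_1$: the paper simply takes for granted that negative characteristics connect $\Gamma_{12}$ to $\sigma$ and positive characteristics connect $\Gamma_{23}$ to $\sigma$, and applies the exponential formula along them. The ``monotonicity and convexity lemmas'' you hope to invoke come \emph{after} Lemma~\ref{Lemma1_increasing} and in fact use it, so you should not build the geometry on them here.
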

\begin{proof}
Let $(\tilde\theta,\tilde r)$ be a point on the positive characteristic $\Gamma_{12}$, and $(\hat\theta,\hat r)$ a point on the negative characteristic $\Gamma_{23}.$ 
Integrate  \eqref{eqR} and \eqref{eqS} along the negative and positive characteristic lines, respectively, to obtain
\begin{align}
R&=\partial_{+}p(\theta,r)=\partial_{+}p(\tilde \theta,\tilde r)\exp\bigg({\int_{\tilde \theta}^\theta \frak{h}(S-R) \ d_{-}\theta}\bigg)>0,\label{R_positive}\\
S&=\partial_{-}p(\theta,r)=\partial_{-}p(\hat \theta,\hat r)\exp\bigg({\int_{\hat \theta}^\theta \frak{h}(R-S) \ d_{+}\theta}\bigg). \label{S_positive}
\end{align}
The data \eqref{Rrare} ensures $R>0$ only.
Thus we check the positiveness of $S$ to complete the proof. 
From \eqref{S_positive}, if $S=0$ at $(\hat \theta,\hat r)$ then $S=0$ along the positive characteristic passing through $(\hat \theta,\hat r)$ in region $\mathcal{R}_1$. Thus $R=S=0$ at a point different from $\Xi_1$ on $\sigma$ which is a contradiction. 
Therefore $S\neq 0$ along $\Gamma_{23},$ possibly excluding the endpoints.
If $S<0$ at $(\hat \theta,\hat r)$  then $R=S<0$ somewhere on the sonic curve which is again a contradiction to $R>0$. 
So we conclude that $S>0$ along $\Gamma_{23}$ for $\theta\neq \theta_{2}, \theta_{3}.$ 
Thus $R, S>0$ and consequently $p_{\theta}>0$ in the interior of region $\mathcal{R}_1.$
\end{proof}
By Lemma~\ref{Lemma1_increasing}, we deduce $R=0$ in the simple wave region $\mathcal{R}_2$.

We next discuss monotonicity properties of the characteristics.
To ease the analysis, we write the characteristics in self-similar coordinates. From $\dfrac{dr}{d\theta}=\pm\lambda$, the characteristics $\eta=\eta(\xi)$ in the $(\xi,\eta)$-plane read
\begin{align}
\label{cartesiancharacteristics}
\frac{d\eta}{d\xi}=\Lambda_{\pm}
=\frac{\xi \eta\pm \sqrt{c^2(\xi^2+\eta^2-c^2)}}{\xi^2-c^2}.
\end{align}
Let the corresponding directional derivatives in the self-similar coordinates be;
$$\frac{dp}{d_{\pm}\xi}=\frac{\partial p}{\partial \xi}+\Lambda_{\pm}\frac{\partial p}{\partial\eta}, \ \ \frac{dp}{d_{\pm}\eta}=\frac{\partial p}{\partial \eta}+\Lambda_{\pm}^{-1}\frac{\partial p}{\partial\xi}.$$
We observe that in region $\mathcal{R}_1\cap \{(\xi,\eta), \xi>0, \eta>0\},$ 
\begin{align}
\xi \Lambda_{-}-\eta&<0,\ \ \ \eta\Lambda_{-}+\xi>0,\label{fornegative}\\
\eta\Lambda_{+}^{-1}-\xi&<0, \ \ \ \xi\Lambda_{+}^{-1}+\eta>0, \label{forpositive}
\end{align}
and $$(c^2-\xi^2)\Lambda_{\pm}^2+2\xi\eta \Lambda_{\pm}+c^2-\eta^2=0,$$ 
which can be solved for $c^2$ to get
\begin{align}
c^2=\frac{(\xi \Lambda_{-}-\eta)^2}{\Lambda_{-}^2+1}, \label{cfornegative} 
\end{align}
or 
\begin{align}
c^2=\frac{(\xi-\eta\Lambda_{+}^{-1})^2}{\Lambda_{+}^{-2}+1} \label{cforpositive}.
\end{align}

We next discuss properties of the characteristics.
\begin{lemma}
\label{Lemma_decreasing/increasing}
The hyperbolic solution $p\in C^2(\Rone)$  to the Goursat  problem  
has the following properties:
\begin{enumerate}
\item Along the negative characteristics $\dfrac{d\eta}{d\xi} = \Lambda_-$ starting from any point on $\Gamma_{12}\setminus \Xi_1$:

(i)  $\displaystyle{\frac{dp}{d_{-}\xi}<0}$,
(ii) $\dfrac{d\Lambda_{-}}{d_{-}\xi}>0$,
(iii) $\Lambda_-<0$, 
and
(iv) ${\dfrac{dp}{d_{-}\eta}>0}$;

\item   Along the positive characteristics $\dfrac{d\eta}{d\xi} =\Lambda_+$ starting from any point on $\Gamma_{23}$:

(v) $\dfrac{dp}{d_{+}\xi}<0$, 
(vi) $\dfrac{d\Lambda^{-1}_{+}}{d_{+}\eta}<0$, 
(vii) $\Lambda_+<0,$ and
(viii) $\dfrac{dp}{d_{+}\eta}>0,$
\end{enumerate}
\end{lemma}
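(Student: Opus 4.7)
The plan is to reduce all eight monotonicity statements to three ingredients: (a) the positivity $R, S > 0$ inside $\Rone$ from Lemma~\ref{Lemma1_increasing}; (b) explicit initial values of $\Lambda_\pm$ on the characteristic boundaries $\Gamma_{12}$ and $\Gamma_{23}$; and (c) linear ODEs for $\Lambda_\pm$ obtained by differentiating the characteristic identity $(c^2-\xi^2)\Lambda^2 + 2\xi\eta\Lambda + c^2 - \eta^2 = 0$ along the appropriate characteristic and signing the coefficients via the geometric inequalities \eqref{fornegative}--\eqref{cforpositive}. Parts 1 and 2 are parallel under the substitutions $+\leftrightarrow-$ and $\xi\leftrightarrow\eta$, so I would prove Part 1 carefully and only point out the modifications needed for Part 2.

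For Part 1, parametrizing the negative characteristic by $\theta$ gives $\frac{d\xi}{d\theta}\big|_- = -\lambda\cos\theta - r\sin\theta$, which is strictly negative in the first quadrant since $\lambda>0$ in $\Rone$, while $\frac{dp}{d\theta}\big|_- = \partial_- p = S > 0$. Dividing yields (i), and since $(c^2)' > 0$ this also gives $\frac{dc^2}{d_-\xi} < 0$. Next, on $\Gamma_{12}$ the rarefaction identity $c^2 = \eta^2$ forces $\sqrt{c^2(\xi^2+\eta^2-c^2)} = \xi\eta$, making the numerator of $\Lambda_-$ vanish; since the hypothesis $2c(p_4) > c(p_1)$ keeps $\Gamma_{12}$ in $\theta > \pi/4$, the denominator $\xi^2-c^2=\xi^2-\eta^2$ is nonzero, so $\Lambda_-|_{\Gamma_{12}} = 0$. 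Differentiating \eqref{cfornegative} along the characteristic with $\xi$ as parameter expresses $\frac{d\Lambda_-}{d_-\xi}$ as $\frac{dc^2}{d_-\xi}$ divided by a coefficient whose sign is controlled by the strict inequalities $\xi\Lambda_- - \eta < 0$ and $\eta\Lambda_- + \xi > 0$ in \eqref{fornegative}; combining signs yields (ii). Since (ii) says $\Lambda_-$ increases with $\xi$ while the characteristic enters $\Rone$ in the direction of decreasing $\xi$, the boundary value $0$ is pushed strictly below zero in the interior, giving (iii). Finally, (iv) follows from $\frac{dp}{d_-\eta} = \Lambda_-^{-1}\frac{dp}{d_-\xi}$ with both factors on the right negative.

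Part 2 begins with (viii): parametrizing the positive characteristic by $\eta$ gives $\frac{d\eta}{d\theta}\big|_+ = \lambda\sin\theta + r\cos\theta > 0$ in the first quadrant, and together with $R>0$ this produces $\frac{dp}{d_+\eta} > 0$, hence $\frac{dc^2}{d_+\eta} < 0$. Differentiating \eqref{cforpositive} along the positive characteristic and signing the resulting coefficient via $\eta\Lambda_+^{-1}-\xi<0$ from \eqref{forpositive} gives (vi). The initial value of $\Lambda_+^{-1}$ on $\Gamma_{23}$ is obtained from the fact that $\Gamma_{23}$ is itself a negative characteristic: its tangent slope equals $\Lambda_-|_{\Gamma_{23}}$, and $\Lambda_+$ is then recovered as the other root of the characteristic quadratic via Vieta, with sign pinned down by the convexity of $\Gamma_{23}$ assumed in Theorem~\ref{mainth}. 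Propagation by (vi) then yields (vii), and (v) follows immediately from $\frac{dp}{d_+\xi} = \Lambda_+ \frac{dp}{d_+\eta}$.

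The main obstacle, I expect, lies in the initialization of Part 2 on $\Gamma_{23}$. Unlike $\Gamma_{12}$, on which $c^2=\eta^2$ makes $\Lambda_-|_{\Gamma_{12}}=0$ and every sign transparent, the data on $\Gamma_{23}$ are prescribed only abstractly through the compatibility conditions of Theorem~\ref{mainth}, and extracting the correct sign of $\Lambda_+$ requires combining the geometric convexity of $\Gamma_{23}$ with the constraints \eqref{forpositive} to select the right branch of the quadratic. Once this initialization is in hand, all subsequent propagation is routine bookkeeping with the ODEs derived from \eqref{cfornegative}--\eqref{cforpositive} and the positivity of $R,S$ from Lemma~\ref{Lemma1_increasing}.
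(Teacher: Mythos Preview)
Your approach is largely correct and genuinely different from the paper's. For Part~1 you run the argument ``outside-in'': initialize $\Lambda_-=0$ on $\Gamma_{12}$ (from $c^2=\eta^2$), then use the ODE~\eqref{convexity} to show $\Lambda_-$ decreases as $\xi$ decreases into $\Rone$, obtaining (iii) directly; (iv) then follows from (i) and (iii). This is cleaner than the paper, which runs ``inside-out'': it reads off $\dfrac{dp}{d_-\eta}>0$ on the sonic boundary $\sigma$ from \eqref{Ssigma}, and then rules out $\Lambda_-=0$ in the interior by a contradiction with convexity (two horizontal-tangent points on a strictly convex arc). Your route avoids any reference to $\sigma$, which is a free boundary; the paper's route avoids tracking the direction of motion along the characteristic.

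For Part~2, however, the initialization on $\Gamma_{23}$ has a genuine gap. Appealing to ``convexity of $\Gamma_{23}$'' and Vieta does not pin down the sign of $\Lambda_+$: Vieta gives $\Lambda_+\Lambda_-=\dfrac{c^2-\eta^2}{c^2-\xi^2}$, but on $\Gamma_{23}$ the numerator $c^2-\eta^2$ vanishes at $\Xi_2$ and its sign just inside is not controlled by convexity. What actually works is the explicit formula: on $\Gamma_{23}$ one has $\xi^2<c^2$ (since $\theta\ge\theta_2>\pi/4$ by the hypothesis $2c(p_4)>c(p_1)$, $\xi$ decreases while $c$ increases along $\Gamma_{23}$), so the denominator $\xi^2-c^2$ in $\Lambda_+=\dfrac{\xi\eta+\sqrt{c^2(r^2-c^2)}}{\xi^2-c^2}$ is negative while the numerator is positive, giving $\Lambda_+<0$ directly. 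The paper sidesteps this entirely by again working from $\sigma$, where $\Lambda_+=-\xi/\eta\le 0$ is explicit, and then showing $\Lambda_+^{-1}\neq 0$ in the interior via a contradiction that invokes the already-proven (iv) from Part~1. (Minor slip: you wrote $\dfrac{dc^2}{d_+\eta}<0$; it is $>0$, but this does not affect the sign conclusion for~(vi).)
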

\begin{proof}
(i) From
\begin{equation}
\begin{aligned}\label{S}
0< S=\partial_{-}p&=\bigg(r\cos\theta-r\sin\theta\sqrt{\frac{r^2-c^2}{c^2}}\bigg)\frac{dp}{d_{-}\eta}\\
&=-\bigg(r\sin\theta+r\cos\theta \sqrt{\frac{r^2-c^2}{c^2}}\bigg)\frac{dp}{d_{-}\xi},
\end{aligned}
\end{equation}
and by Lemma~\ref{Lemma1_increasing}, we obtain the strict inequality $$\frac{dp}{d_{-}\xi}<0,$$ everywhere in the region $\mathcal{R}_1\cap \{(\xi,\eta), \xi\geqslant0, \eta>0\}.$

(ii) Differentiating (\ref{cfornegative}) along $\dfrac{d}{d_{-}\xi}$ gives
\begin{align}\label{convexity}
\gamma\kappa p^{\kappa-1}\frac{dp}{d_{-}\xi}=\frac{2(\xi\Lambda_{-}-\eta)(\xi+\eta \Lambda_{-})}{(\Lambda_{-}^2+1)^2}\frac{d\Lambda_{-}}{d_{-}\xi},
\end{align}
thus using (\ref{fornegative}), we conclude that $$\frac{d^2\eta}{d\xi^2}=\frac{d\Lambda_{-}}{d_{-}\xi}>0$$ in $\mathcal{R}_1\cap\{(\xi, \eta), \xi>0, \eta>0\}$, which means that the negative characteristics are convex. 

(iii)-(iv) Evaluate the first equation of (\ref{S}) on $\sigma'=\sigma\setminus\{\Xi_1,\Xi_3\}$ where $c^2=r^2$ to get
\begin{eqnarray}\label{Ssigma}
0< S=r\cos\theta \frac{dp}{d_{-}\eta}.
\end{eqnarray}
This immediately implies $\dfrac{dp}{d_{-}\eta} >0$ on $\sigma'\cap \{(\xi,\eta), \xi> 0, \eta>0\}$. If $\mathcal{R}_1\cap \{(0,\eta), \eta<c(p_1)\}\neq\emptyset$ or $\sigma\cap\{(0,\eta), \eta<c(p_1)\}\neq\emptyset$ then \begin{align}
\label{negativealongsigma2}
\frac{dp}{d_{-}\eta}<0
\end{align} and $\Lambda_{-}=-\dfrac{\xi}{\eta}=0,$ respectively, at the points on the $\eta$-axis. On the other hand, 
\begin{eqnarray*}
0> \dfrac{dp}{d_-\xi} = p_\xi + \Lambda_- p_\eta = \Lambda_-( p_\eta + \Lambda_-^{-1} p_\xi) = \Lambda_- \dfrac{dp}{d_-\eta}
\end{eqnarray*}
in region $\mathcal{R}_1\cap \{(\xi,\eta), \xi\geqslant0, \eta>0\}.$
We show that  $\Lambda_-\neq 0$. 
If not then we have unbounded  $\dfrac{dp}{d_-\eta}$ when $\Lambda_- = 0$ at some point $(r^*,\theta^*)$ on the negative characteristics. However  $c^2=(\eta^*)^2$ when $\Lambda_-=0$, and at the same time $\dfrac{dp}{d_-\xi} = p_\xi<0$. 
 At the points on $\Gamma_{12}$ we know that $c^2=\eta^2$ so by (\ref{cartesiancharacteristics}) the negative characteristics satisfy $\Lambda_{-}=0.$ Hence a convex characteristic has at least two points with $\Lambda_{-}=0$ which leads to a contradiction. Thus, if $\mathcal{R}_1\cap \{(0,\eta), \eta<c(p_1)\}\neq\emptyset$ then $\Lambda_{-}>0,$ $\dfrac{dp}{d_{-}\eta} <0$ and $\dfrac{dp}{d_{-}\xi}<0$ in the region $\mathcal{R}_1\cap \{(\xi,\eta), \xi> 0, \eta>0\}$ which is again a contradiction to the convexity and the behavior of the negative characteristics along $\Gamma_{12}.$  We therefore conclude that the change of type occurs in the first quadrant, $\mathcal{R}_1$ is located in the first quadrant and $\Lambda_{-}< 0$, $\dfrac{dp}{d_{-}\eta} >0$ in the interior of the entire region $\mathcal{R}_1$. In addition, note that $\sigma\setminus\Xi_1\subseteq \{(\xi,\eta), \xi> 0, \eta>0\}.$

(v) By Lemma \ref{Lemma1_increasing}, and 
\begin{align*}
0< R=\partial_{+}p&=\bigg(-r\sin\theta+r\cos\theta \sqrt{\frac{r^2-c^2}{c^2}}\bigg)\frac{dp}{d_{+}\xi}\\
&=\bigg(r\cos\theta+r\sin \theta\sqrt{\frac{r^2-c^2}{c^2}}\bigg)\frac{dp}{d_{+}\eta},
\end{align*}
we conclude that $$\frac{dp}{d_{+}\eta}>0$$ everywhere in the interior of $\mathcal{R}_1$.

(vi) In addition differentiating (\ref{cforpositive}) gives 
\begin{align*}
\gamma \kappa p^{\kappa-1}\frac{dp}{d_{+}\eta}=\frac{2(\xi\Lambda_{+}^{-1}+\eta)(\eta\Lambda_{+}^{-1}-\xi)}{(\Lambda_{+}^{-2}+1)^2}\frac{d\Lambda_{+}^{-1}}{d_{+}\eta}.
\end{align*}
By (\ref{forpositive}), we conclude that 
$$\frac{d^2\xi}{d\eta^2}=\frac{d\Lambda_{+}^{-1}}{d_{+}\eta}<0$$ 
which means that the positive characteristics are concave.

(vii)-(viii) On $\sigma$ (including $\Xi_1$) we have  
\begin{align}
\label{positivealongsigma}
\frac{dp}{d_{+}\xi}<0 \ \, \ \ \text{and} \ \  \Lambda_{+}=-\frac{\xi}{\eta}\le0.
\end{align} 
Thus by a similar argument as before we conclude the claim.
More precisely, from 
\begin{eqnarray*}
0<\dfrac{dp}{d_{+}\eta} = p_\eta + \Lambda^{-1}_+ p_\xi = \Lambda^{-1}_+ (\Lambda_+ p_\eta  + p_\xi) = \Lambda^{-1}_+ \dfrac{dp}{d_+\xi},
\end{eqnarray*}
we show that $\Lambda^{-1}_+\neq 0$ to obtain $\Lambda^{-1}_+<0$ and $\dfrac{dp}{d_+\xi}<0$ in the interior of $\Rone$ by a contradiction argument.
Suppose not. Then $c^2=\xi^2$ and $\dfrac{dp}{d_{+}\eta} = p_\eta>0$ at the contradiction point $(r^*, \theta^*)$ on the positive characteristics. On the other hand for the negative characteristics passing from $(r^*, \theta^*)$ the following hold:
\begin{eqnarray*}
\Lambda_- =-\frac{1}{\tan 2\theta^*} , \quad 0< \partial_-p = \frac{r^*\cos2\theta^*}{\cos\theta^*} \frac{dp}{d_-\eta}.
\end{eqnarray*}
However we have shown that
\[ \dfrac{dp}{d_-\eta} >0.\] 
The contradiction is immediate when $\pi/4 \le  \theta^* \le \pi/2$.
\end{proof}

In the following two lemmas, we discuss the properties of the sonic boundary $\sigma$; in particular the monotonicity and the corner point $\Xi_3$ with the level curve where $\{p=p(\Xi_3)\}$.
\begin{lemma}
\label{pointxi3}
The level curve $\{p=p(\Xi_3)\}$ of the solution $p\in C^1(\Rone)$ 
and the sonic boundary $\sigma$ meet tangentially at $\Xi_3$ at which $$\dfrac{d\eta}{d\xi}=-\dfrac{\xi}{{\eta}}.$$ 
 In addition $R=S$ on the sonic boundary. In particular $R=S>0$ on $\sigma\setminus\{\Xi_1,\Xi_3\}$.
\end{lemma}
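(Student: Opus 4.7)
The plan is to split the lemma into three pieces, drawing on the definitions $R = p_\theta + \lambda p_r$, $S = p_\theta - \lambda p_r$, the propagation formulas~\eqref{R_positive}--\eqref{S_positive}, and the interior positivity from Lemma~\ref{Lemma1_increasing}. On $\sigma$ we have $c^2 = r^2$ and hence $\lambda = 0$, which immediately gives $R - S = 2\lambda p_r = 0$ and $R = S = p_\theta$ throughout $\sigma$. Combined with Lemma~\ref{Lemma1_increasing} and continuity of $p \in C^1(\Rone)$, this yields $R = S = p_\theta \geq 0$ on $\sigma$. To upgrade to strict positivity on $\sigma\setminus\{\Xi_1,\Xi_3\}$, I trace each such point $\Xi$ backward along its negative characteristic to a point $(\tilde\theta,\tilde r) \in \Gamma_{12}$ with $\tilde\theta < \pi/2$; there the boundary datum $R|_{\Gamma_{12}} = 4\lambda_+ p/(\kappa r)$ is strictly positive because $\lambda_+ = r\cos\theta/\sin\theta > 0$ when $\theta < \pi/2$, and the exponential factor in~\eqref{R_positive} is positive, giving $R(\Xi) > 0$ and then $S(\Xi) = R(\Xi) > 0$ by the sonic identity.

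For the tangency at $\Xi_3$, the key input is $p_\theta(\Xi_3) = 0$, equivalently $R(\Xi_3) = S(\Xi_3) = 0$. At $\Xi_3$, both $\sigma$ and $\Gamma_{23}$ are sonic ($\lambda = 0$), so the boundary formulas $R|_{\Gamma_{23}} = z_\theta + z_r\lambda$ and $S|_{\Gamma_{23}} = z_\theta - z_r\lambda$ both collapse to $z_\theta(\Xi_3)$, which the compatibility condition at the corner forces to vanish. Given $p_\theta(\Xi_3) = 0$, the identity $\xi p_\eta - \eta p_\xi = p_\theta$ (a direct polar-to-Cartesian conversion) implies that $\nabla p$ at $\Xi_3$ is purely radial, $\nabla p|_{\Xi_3} = p_r(\cos\theta_3,\sin\theta_3)$, so the level curve $\{p = p(\Xi_3)\}$ has tangent perpendicular to the radial direction, with slope $d\eta/d\xi = -\cot\theta_3 = -\xi/\eta$. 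For $\sigma$ itself, parametrizing as $r = \tau(\theta)$ and differentiating $\tau^2 = c^2(p)$ along $\sigma$ gives
\[
[2\tau - \cp\, p_r]\,\tau'(\theta_3) = \cp\, p_\theta(\Xi_3) = 0,
\]
so the non-degeneracy $\cp\, p_r \neq 2\tau$ at $\Xi_3$ forces $\tau'(\theta_3) = 0$, which in Cartesian coordinates gives tangent slope $-\xi/\eta$ as well, establishing the tangency.

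The hardest part will be the corner analysis at $\Xi_3$: rigorously extracting $R(\Xi_3) = S(\Xi_3) = 0$ from the compatibility conditions on $\Gamma_{23}$ and verifying the non-degeneracy $\cp\, p_r \neq 2\tau$ at $\Xi_3$ so that $\tau'(\theta_3) = 0$ is forced rather than merely allowed. Both are delicate because $\sigma$ and $\Gamma_{23}$ are already tangent at $\Xi_3$---this is precisely the compressive-wave degeneracy advertised in the introduction---so the propagation formulas~\eqref{R_positive}--\eqref{S_positive} must be examined uniformly in the limit $\Xi \to \Xi_3$.
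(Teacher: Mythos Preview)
Your plan is correct and essentially parallels the paper's proof. The parts $R=S$ on $\sigma$ (from $\lambda=0$) and $R=S>0$ on $\sigma\setminus\{\Xi_1,\Xi_3\}$ (via Lemma~\ref{Lemma1_increasing}) are handled the same way. Two organizational differences are worth noting.

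For $p_\theta(\Xi_3)=0$, the paper does not invoke the data condition [G1.] directly; instead it uses the explicit formula~\eqref{S1} for $S$ along the negative characteristic $\Gamma_{23}$. Since $\Xi_3$ is sonic by definition, $r_-'=-\lambda\to 0$ there, and (assuming $r_-''$ bounded, which the paper argues is forced by regularity of the adjacent simple wave via~\eqref{Ssimple}) the factor $(-r_-')$ in~\eqref{S1} drives $S\to 0$. Your appeal to the boundary collapse $R=S=z_\theta$ at the sonic endpoint together with the prescribed compatibility yields the same conclusion more directly; both routes are valid.

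For the tangency, the paper works in Cartesian coordinates and splits on whether $p_\eta(\Xi_3)=0$. This case-split is the paper's device for the non-degeneracy you flag: if $p_\eta=0$ then $\xi p_\eta=\eta p_\xi$ forces $p_\xi=0$, hence in your polar equation $p_r=0$ and $2\tau-\cp\,p_r=2\tau>0$ automatically, so $\tau'(\theta_3)=0$ follows cleanly. If $p_\eta\neq 0$, the level curve already has slope $-p_\xi/p_\eta=-\xi/\eta$, and the paper reads off the sonic-boundary slope from~\eqref{sonicboundary}. In either presentation the residual degenerate possibility $\cp\,p_r=2\tau$ with $p_r\neq 0$ (equivalently $\cp\,p_\eta=2\eta$ with $p_\eta\neq 0$) is not separately excluded at this stage; the paper returns to the sign of $\cp\,p_\eta-2\eta$ along $\sigma$ only in Lemma~\ref{intersectionpoint}. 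So the difficulty you flag is real, and the paper's case-split resolves precisely the half of it where $\nabla p(\Xi_3)=0$.
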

\begin{proof}
In region $\mathcal{R}_1\setminus\{\Xi_1,\Xi_3\},$ by Lemma~\ref{Lemma1_increasing},
\begin{align*}
R&=\partial_{+}p=p_{\xi}\bigg(-\eta +\xi\sqrt{\frac{\eta^2+\xi^2-c^2}{c^2}}\bigg)+p_{\eta}\bigg(\xi+\eta\sqrt{\frac{\eta^2+\xi^2-c^2}{c^2}}\bigg)> 0,\\
S&=\partial_{-}p=p_{\xi}\bigg(-\eta -\xi\sqrt{\frac{\eta^2+\xi^2-c^2}{c^2}}\bigg)+p_{\eta}\bigg(\xi-\eta\sqrt{\frac{\eta^2+\xi^2-c^2}{c^2}}\bigg)> 0,
\end{align*}
thus 
\begin{align}
\label{conditiononsonic}
R+S=2(\xi p_{\eta}-\eta p_{\xi})> 0.
\end{align}
Since on the sonic boundary $c^2=r^2$ it is immediate to have $R=S=p_{\theta}=-\eta p_{\xi}+\xi p_{\eta}>0$ on $\sigma\setminus\{\Xi_1,\Xi_3\}$. 
On the boundary $\Gamma_{23}$ (see Section \ref{sectionR_2} for details) we have 
\begin{align}
S=\partial_{-}p&=\frac{2r^3_-(-r'_-)p^{1/\gamma}}{\gamma \kappa}\left[\frac{r_-r''_--r^2_--2r'^2_-}{(r'^2_-+r^2_-)^2}\right]. \label{S1}
\end{align}
If $r_-''$ is bounded (otherwise $S$ might maintain a positive lower bound in the neighborhood of $\Xi_3$ and by (\ref{Ssimple}) that would mean that there is a singularity at $\Xi_3$) then $S\rightarrow 0$ as $\theta\rightarrow \theta_3$ on $\Gamma_{23}$
(note that $\Gamma_{23}$ is prescribed to be convex with the smooth data). 
Thus $\Gamma_{23}$ and the level curve $\{p=p(\Xi_3)\}$ meet tangentially at $\Xi_3$ and $\xi p_{\eta}=\eta p_{\xi}.$

The tangential derivative of $c^2=\eta^2+\xi^2$ along $\sigma =\{(\xi,\eta_\sigma(\xi) )\}$ reads
\begin{align}
\label{sonicboundary}
c_p^2 \ p_{\xi}+c_p^2 \ p_{\eta} \frac{d\eta_\sigma}{d\xi}=2\eta \frac{d\eta_\sigma}{d\xi}+2\xi.
\end{align}
On a level curve $L=\{X=(\xi,\eta_L(\xi))\}$, we have 
\begin{align}
\label{levelcurves}
\frac{dp}{d\xi} (\xi,\eta_L(\xi)) =p_{\xi}+\frac{d\eta_L}{d\xi}p_{\eta}=0.
\end{align}
If $p_{\eta}=0$ at $\Xi_3$ then $p_{\xi}=0$ and thus by (\ref{cartesiancharacteristics}) we have $$\Lambda_{-}=\frac{d\eta_{\sigma}}{d\xi}=-\frac{\xi}{\eta}.$$
If $p_{\eta}\neq 0$ at $\Xi_3$ then $$\Lambda_{-}=\frac{d\eta_{L}}{d\xi}=\frac{d\eta_{\sigma}}{d\xi}=-\frac{\xi}{\eta}.$$ We thus conclude that a level curve meets tangentially the sonic boundary at $\Xi_3.$\end{proof}

\begin{lemma}
\label{intersectionpoint}
The sonic boundary $\sigma=\{(\xi,\eta(\xi))\}$ of the solution $p\in C^2(\Rone)$ is strictly decreasing in the $\xi$ direction. That is $\dfrac{d\eta}{d\xi}<0$ 
everywhere along $\sigma,$ except at $\Xi_1.$
\end{lemma}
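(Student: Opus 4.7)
The plan is to parametrize $\sigma$ in polar as $r=\tau(\theta)$, $\theta\in[\theta_3,\pi/2]$, convert to Cartesian via $\xi=\tau(\theta)\cos\theta$, $\eta=\tau(\theta)\sin\theta$, and compute by the chain rule
$$\frac{d\eta}{d\xi}\bigg|_\sigma=\frac{\tau'(\theta)\sin\theta+\tau(\theta)\cos\theta}{\tau'(\theta)\cos\theta-\tau(\theta)\sin\theta}.$$
Theorem~\ref{mainth} supplies $\tau'>0$ on $(\theta_3,\pi/2)$ with $\tau'(\theta_3)=\tau'(\pi/2)=0$; since the open interval sits in the first quadrant, the numerator is strictly positive. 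So the lemma reduces to showing $g(\theta):=\tau'(\theta)\cos\theta-\tau(\theta)\sin\theta<0$ on all of $(\theta_3,\pi/2]$. At the endpoints, $\tau'=0$ immediately gives $g(\theta_3)=-\tau(\theta_3)\sin\theta_3<0$ and $g(\pi/2)=-c_1<0$ (the latter also confirming that $\sigma$ meets $\Xi_1$ with a vertical Cartesian tangent, consistent with $\Xi_1$ lying on the $\eta$-axis).

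For the interior, I would differentiate the sonic identity $c^2(p(\tau(\theta),\theta))=\tau(\theta)^2$ along $\sigma$ to obtain
$$\tau'(\theta)=\frac{c_p^2\,p_\theta}{2\tau-c_p^2\,p_r}\bigg|_\sigma,$$
with both numerator and denominator strictly positive: $p_\theta=R=S>0$ on $\sigma\setminus\{\Xi_1,\Xi_3\}$ by Lemma~\ref{pointxi3}, while $2\tau-c_p^2p_r>0$ follows from $\tau'>0$. Substituting this expression into $g$ and applying the polar-to-Cartesian identity $p_\theta\cos\theta+p_r\tau\sin\theta=\tau\,p_\eta$ (obtained by writing $p_\theta=\xi p_\eta-\eta p_\xi$ and $p_r=\cos\theta\,p_\xi+\sin\theta\,p_\eta$ and collecting terms), the desired inequality $g<0$ becomes equivalent to
$$c_p^2\,p_\eta<2\eta,\qquad\text{i.e.,}\qquad \partial_\eta(c^2-r^2)<0\quad\text{on }\sigma\setminus\{\Xi_1,\Xi_3\},$$
which geometrically asserts that the $+\eta$ direction from $\sigma$ points into the hyperbolic interior of $\Rone$.

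The central technical difficulty lies in this last inequality. From $\tau'>0$ we automatically have $\partial_r(c^2-r^2)=c_p^2 p_r-2r<0$, so the $+r$ direction is into $\Rone$; but $\partial_\theta(c^2-r^2)=c_p^2 p_\theta>0$, so the $+\theta$ direction exits $\Rone$, and $\partial_\eta(c^2-r^2)=\sin\theta\,\partial_r(c^2-r^2)+(\cos\theta/r)\partial_\theta(c^2-r^2)$ is a competition between these two. My strategy is to argue by contradiction: if $g(\theta^*)=0$ at some $\theta^*\in(\theta_3,\pi/2)$, then $\sigma$ has a vertical Cartesian tangent at $(\xi^*,\eta^*)=(\tau(\theta^*)\cos\theta^*,\tau(\theta^*)\sin\theta^*)$, at which both characteristic families share the common finite slope $-\xi^*/\eta^*<0$. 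Combining this with the convexity of negative characteristics and concavity of positive characteristics in Lemma~\ref{Lemma_decreasing/increasing}, together with the global positions $\Xi_1=(0,c_1)$ and $\Xi_3=(\xi_3,\eta_3)$ (with $0<\xi_3<c_1$ and $\eta_3<c_1$), should force the characteristic foliation of $\Rone$ near $(\xi^*,\eta^*)$ to be locally incompatible with the Goursat data on $\Gamma_{12}$ and $\Gamma_{23}$. Converting this geometric obstruction into a clean analytic contradiction is the chief technical obstacle of the proof.
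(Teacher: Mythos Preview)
Your proposal has two genuine gaps.

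First, it is circular: you invoke Theorem~\ref{mainth} to obtain $\tau'>0$ on $(\theta_3,\pi/2)$, but Lemma~\ref{intersectionpoint} is one of the a~priori lemmas that feed into the existence machinery behind Theorem~\ref{mainth} (the proof of Lemma~\ref{exlemma} explicitly cites ``Lemmas~5.1--5.6''). Without $\tau'>0$ you lose both the positivity of the numerator $\tau'\sin\theta+\tau\cos\theta$ and the sign of $2\tau-(c^2)'p_r$, so your reduction of $g<0$ to $(c^2)'p_\eta<2\eta$ collapses: the equivalence you write down depends on already knowing the sign of the denominator.

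Second, even granting the reduction, you do not prove $(c^2)'p_\eta<2\eta$. In the paper this inequality is obtained \emph{as a consequence} of $d\eta/d\xi<0$ (in the final paragraph of the proof), not as a route to it. Your sketched contradiction---a vertical tangent of $\sigma$ at $(\xi^*,\eta^*)$ combined with convexity/concavity of the characteristic families from Lemma~\ref{Lemma_decreasing/increasing}---does not close: $\sigma$ is not a characteristic, and the fact that $\Lambda_\pm=-\xi^*/\eta^*$ at any sonic point (regardless of whether $g$ vanishes there) means the curvature signs of $\Gamma_\pm$ place no direct constraint on the tangent direction of $\sigma$.

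The paper's argument is different and self-contained within the a~priori setting. It supposes $\sigma$ has a vertical tangent at some $(\xi^*,\eta^*)$ (so $d\eta/d\xi$ blows up there) and differentiates $c^2=\xi^2+\eta^2$ twice along $\sigma$ to obtain, on the side where $c^2>(c^*)^2$,
\[
\frac{d^2(c^2)}{d\xi^2}=2+2\Bigl(\frac{d\eta}{d\xi}\Bigr)^{2}+2\eta\,\frac{d^2\eta}{d\xi^2}<0.
\]
Writing $g(\eta)=d\eta/d\xi$ turns this into $1+g^2+\eta\,g\,g_\eta<0$, and integration yields $g^2<\mathrm{const}/\eta^2-1$, contradicting the blow-up of $|g|$ as $\eta\to\eta^*>0$. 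A symmetric argument rules out a horizontal tangent. This ODE integration trick is the missing idea in your approach.
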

\begin{proof}
By Lemma \ref{Lemma1_increasing} we first note that in region $\mathcal{R}_2$ we have $R=0$ and $S>0$ along the positive characteristics and therefore the sonic boundary along which $R=S$ cannot extend below $\Xi_3.$
 
Let $(\xi^{\ast},\eta^{\ast})$ be a point on  $\sigma =\{ (\xi(\eta),\eta)\} $ such that $c^2=(c^{\ast})^2$ and $\dfrac{d\xi(\eta^\ast)}{d\eta} =0$.  
Then, in the neighborhood of this point, $\dfrac{d\eta}{d\xi}$ is unbounded. 
Specifically, $\dfrac{d^2\eta}{d\xi^2}>0$ when $\eta<\eta{\ast}$; 
and $\dfrac{d^2\eta}{d\xi^2}<0$ when $\eta>\eta^{\ast}.$ 
The tangential derivative to $c^2=\xi^2+\eta^2$ along $\sigma=\{ (\xi, \eta(\xi) )\}$ now reads
$\dfrac{d(c^2)}{d\xi}=2\xi+2\eta \dfrac{d\eta}{d\xi}.$ 
We deduce that $\dfrac{d(c^2)}{d\xi}$ is unbounded when $c^2=(c^{\ast})^2,$ $\dfrac{d^2(c^2)}{d\xi^2}>0$ when $c^2<(c^{\ast})^2$ and $$\frac{d^2(c^2)}{d\xi^2}=2+2\bigg(\frac{d\eta}{d\xi}\bigg)^2+2\eta\frac{d^2\eta}{d\xi^2}<0$$ when $c^2>(c^{\ast})^2.$ In the latter case, let $\dfrac{d\eta}{d\xi}=g(\eta)$ then $$ \frac{d^2(c^2)}{d\xi^2}=2+2(g(\eta))^2+2\eta \frac{dg(\eta)}{d\eta}g(\eta)<0.$$ A routine integration yields $(g(\eta))^2<\frac{\text{constant}}{\eta^2}-1,$ which leads to a contradiction because close to $\eta=\eta^{\ast}$ the slope is unbounded. Since the sonic boundary lies in the region $\{(\xi,\eta), \xi\geqslant 0,\  \eta>0\}$ then $\dfrac{d\eta}{d\xi}\leqslant 0.$ 
Note that it would be possible to have a point on $\sigma$ such that $\dfrac{d\xi}{d\eta}=0$ if $\eta^{\ast}=0.$ 
However $\sigma$ should be located only where $0< c(p_4) \le  \eta \le c(p_1)$.

Assume now that there is a point $(\xi_{\ast},\eta_{\ast})$ on $\sigma$ such that $c^2=(c_{\ast})^2$ and $\dfrac{d\eta}{d \xi}=0$ at this contradiction point on the sonic boundary. 
Then from the unboundedness of $\dfrac{d\xi}{d\eta}$, 
we have $\dfrac{d^2\xi}{d\eta^2}<0$ when $\xi>\xi_{\ast}$ and $\dfrac{d^2\xi}{d\eta^2}>0$ when $\xi<\xi_{\ast}.$ 
Again the tangential derivative of $c^2=\xi^2+\eta^2$ on $\sigma=\{ (\xi(\eta),\eta)\}$ 
becomes $\dfrac{d(c^2)}{d\eta}=2\eta+2\xi \dfrac{d\xi}{d\eta}.$
We deduce that $\dfrac{d(c^2)}{d\eta}$ is unbounded when $c^2=(c_{\ast})^2,$ $\dfrac{d^2(c^2)}{d\eta^2}>0$ when $c^2<(c_{\ast})^2$ and $$\frac{d^2(c^2)}{d\eta^2}=2+2\bigg(\frac{d\xi}{d\eta}\bigg)^2+2\xi\frac{d^2\xi}{d\eta^2}<0$$ when $c^2>(c_{\ast})^2.$ In the latter case, let $\dfrac{d\xi}{d\eta}=f(\xi)$ then $$ \frac{d^2(c^2)}{d\eta^2}=2+2(f(\xi))^2+2\xi \frac{df(\xi)}{d\xi}f(\xi)<0.$$ Closely related to the above, in spirit as well as in technique, we end up with a contradiction. Therefore $\dfrac{d\eta}{d\xi}< 0$ along $\sigma,$ except at $\Xi_1.$  

In (\ref{sonicboundary}), notice that if $(c^2)' \ p_{\eta}=2\eta$ then $(c^2)' \ p_{\xi}=2\xi$ and thus $R+S=0$ which does not hold for points along the sonic boundary different from $\Xi_1$ and $\Xi_3.$ We therefore conclude that $(c^2)' \ p_\eta\neq 2\eta$ and does not change sign along $\sigma$ excluding the endpoints, thus 
\begin{align}
\label{slopeofsonic}
\frac{d\eta}{d\xi}=\frac{2\xi-(c^2)' \ p_{\xi}}{(c^2)' \ p_{\eta}-2\eta}.
\end{align}

Let us assume that $(c^2)' \ p_{\eta}>2 \eta$ everywhere along $\sigma,$ then $p_{\eta}$ is positive, and we also know from Lemma \ref{pointxi3} that the sonic boundary satisfies $\dfrac{d\xi}{d\eta}<0,$ in the neighborhood of $\Xi_3.$ On the other hand, in the neighborhood of $\Xi_1,$ since $c^2=\eta^2$ in region $\mathcal{R}_0$ and $c^2=r^2$ on $\sigma,$ we expect that the level curves have positive slopes and thus $p_{\xi}<0$ by (\ref{levelcurves}). This combined with (\ref{slopeofsonic}) gives $\frac{d\xi}{d\eta}>0$ for the sonic boundary, in the neighborhood of $\Xi_1$. This implies that there must be a point along $\sigma,$ different from the endpoints, such that $\frac{d\xi}{d\eta}=0.$ which is a contradiction. We therefore conclude by (\ref{conditiononsonic}) that $(c^2)' \ p_{\eta}<2\eta$ and $(c^2)' \ p_{\xi}<2\xi$ along $\sigma.$   
\end{proof}
Figure \ref{characteristics} depicts the configuration of the characteristic curves in the supersonic region. 
\begin{figure}[ht]
\psfrag{2}[][][0.7][0]{$\Gamma_{12}$}
\psfrag{1}[][][0.7][0]{$\Xi_1$}
\psfrag{15}[][][0.7][0]{$\eta=c(p_1)$}
\psfrag{16}[][][0.7][0]{$\eta=c(p_4)$}
\psfrag{3}[][][0.7][0]{$\Xi_2$}
\psfrag{5}[][][0.7][0]{$\Xi_3$}
\psfrag{6}[][][0.7][0]{$\sigma$}
\psfrag{14}[][][0.7][0]{$\Gamma_{+}$}
\psfrag{13}[][][0.7][0]{$\Gamma_{-}$}
\psfrag{7}[][][0.7][0]{$\mathcal{R}_0$}
\psfrag{11}[][][0.7][0]{$\mathcal{R}_2$}
\psfrag{12}[][][0.7][0]{$\mathcal{R}_1$}
\psfrag{8}[][][0.7][0]{$\Xi_4$}
\psfrag{10}[][][0.7][0]{$\Gamma_{24}$}
\psfrag{9}[][][0.7][0]{$\Sigma'$}
\psfrag{4}[][][0.7][0]{$\Gamma_{23}$}
\begin{center}
\includegraphics[height = 2.9in,width = 2in]{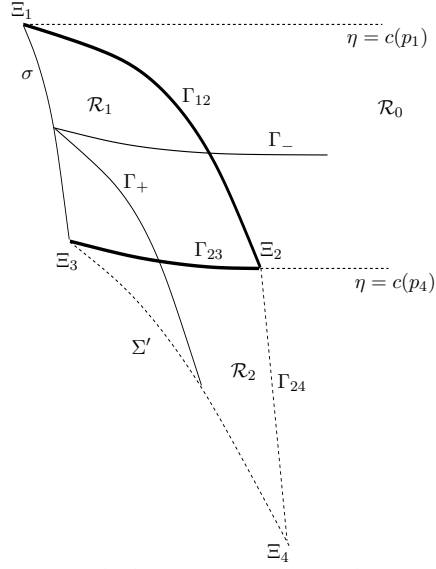}
\end{center}
\vspace*{-4mm}
\caption{Regions and characteristics in the supersonic region.}
\label{characteristics}
\end{figure}
We next state a priori bounds.
\begin{lemma}\label{apriori}
The solution $p\in C^1(\overline {\mathcal{R}_1})$ satisfies
\begin{equation}\label{pbd}
p_4= p(\Xi_2)\le p\le p_1= p(\Xi_1), \quad  in \quad \overline{\mathcal{R}_1}.
\end{equation}  
\end{lemma}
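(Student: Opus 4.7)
The plan is to reduce the inequality to a boundary estimate on $\partial\mathcal{R}_1 = \Gamma_{12}\cup\Gamma_{23}\cup\sigma$ using the interior strict monotonicity $p_\theta > 0$ from Lemma~\ref{Lemma1_increasing}, and then propagate the boundary values to the interior along characteristics using the same monotonicity.

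On $\Gamma_{12}$, the explicit formula $p = \gamma^{-1/\kappa}\eta^{2/\kappa}$ with $\eta = c_1\sin^2\theta$ for $\theta\in[\theta_2,\pi/2]$ (from Section~\ref{sectionR_0} and \eqref{G12}) shows $p$ is strictly increasing in $\theta$ from $p(\Xi_2) = p_4$ to $p(\Xi_1) = p_1$. On $\Gamma_{23}$, since this arc is a negative characteristic with $dr/d\theta = -\lambda \le 0$ starting at $r(\Xi_2) = \sqrt{c_1 c_4}$, one has $r \le \sqrt{c_1 c_4} < c_1$ throughout; combining with the supersonic inequality $c^2(p) \le r^2$ gives $p \le \gamma^{-1/\kappa} r^{2/\kappa} \le \sqrt{p_1 p_4} < p_1$, while $p_\theta = (R+S)/2 \ge 0$ starting from $p(\Xi_2) = p_4$ yields $p \ge p_4$. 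On $\sigma$, the sonic relation $c^2(p) = r^2$ gives $p = \gamma^{-1/\kappa} r^{2/\kappa}$, and $\tau'(\theta) > 0$ from Theorem~\ref{mainth} yields $r_3 \le r \le c_1$, hence $p(\Xi_3) \le p \le p_1$; continuity at $\Xi_3\in\Gamma_{23}\cap\sigma$ gives $p(\Xi_3) \ge p_4$, so $p_4 \le p \le p_1$ holds on $\sigma$ as well.

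For an interior point $P\in\mathcal{R}_1$, I would trace the two families of characteristics through $P$: the backward negative characteristic reaches $\Gamma_{12}$ at a point $P_-$ with $\theta(P_-)<\theta(P)$, while the forward positive characteristic reaches $\sigma$ at $P_+$ with $\theta(P_+)>\theta(P)$. Integrating $dp/d\theta = S > 0$ along the negative and $dp/d\theta = R > 0$ along the positive, both guaranteed by Lemma~\ref{Lemma1_increasing}, gives $p_4 \le p(P_-) < p(P) < p(P_+) \le p_1$ and completes the proof.

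The main obstacle is the upper bound $p \le p_1$ on $\sigma$, which in the plan above invokes the monotonicity $\tau' > 0$ of the free boundary from Theorem~\ref{mainth}. Since Lemma~\ref{apriori} is an a priori bound that typically feeds into the construction of $\sigma$ in Theorem~\ref{mainth}, the arguments must be interleaved carefully, either by establishing geometric monotonicity of candidate sonic arcs independently of the bounds, or by bootstrapping a fixed-point scheme in which each approximation automatically obeys $r\le c_1$ on its associated sonic curve so that the upper bound is preserved through the iteration.
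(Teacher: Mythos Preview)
Your approach is sound and considerably more explicit than the paper's. The paper's proof is two lines: from $p_\theta=(R+S)/2>0$ (Lemma~\ref{Lemma1_increasing}) and the fact that $\mathcal{R}_1$ lies in $\{\theta\le\pi/2\}$ (established in the proof of Lemma~\ref{Lemma_decreasing/increasing}, where it is shown that $\sigma\setminus\Xi_1$ sits in the open first quadrant), the authors infer directly that $p$ is trapped between its values at the extreme-$\theta$ corners $\Xi_2$ and $\Xi_1$. The inference is not spelled out, so your detailed boundary analysis and characteristic propagation actually supply the missing geometry.

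The circularity you flag is genuine for your argument as written, but it is easily removed, and the paper's route points to the fix: you do not need $\tau'>0$, nor any information about $\sigma$ at all, for the upper bound. The same backward negative characteristic you already use for the lower bound has $dr/d\theta=-\lambda<0$, so $r$ increases as $\theta$ decreases along it; hence $r(P)\le r(P_-)\le c_1$, since $P_-\in\Gamma_{12}$ where $r=c_1\sin\theta\le c_1$. Now the supersonic condition at $P$ gives $c^2(p(P))<r(P)^2\le c_1^2$, i.e.\ $p(P)<p_1$, with no appeal to $\sigma$ or to Theorem~\ref{mainth}. This is the content behind the paper's terse remark that ``when $\theta\ge\pi/2$ we are in the subsonic region'': $\mathcal{R}_1$ is swept out by negative characteristics issuing from $\Gamma_{12}$ with decreasing $r$, so it stays inside $r\le c_1$. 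With this one-line replacement for your forward-to-$\sigma$ step, your proof is complete and non-circular.
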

\begin{proof}
By Lemma~\ref{Lemma1_increasing}, we have
\[ p_\theta = (R+S)/2>0.\]
 Thus $p$ is strictly increasing in the $\theta$ direction, and
  $p> p(\Xi_2)=p_4$ in $\mathcal{R}_1$.
 
 Note that the change of type occurs in the first quadrant as the characteristics enter the sonic circle, where $c^2(p)=r^2$, in the first quadrant, 
 and becomes subsonic holding $c^2(p) >r^2$.
 Thus when $\theta\ge \pi/2$ we are now in the subsonic region.
 Therefore $p< p(\Xi_1)=p_1$ in $\mathcal{R}_1$.
 \end{proof}
Similarly, these bounds hold in the simple wave region $\mathcal{R}_2$.
While it is not straightforward to see whether these bounds remain valid in the subsonic region, 
we note that this lemma holds in the entire domain and refer to the forthcoming paper on the transonic problem. 

We next cite estimates from \cite{SongZheng}.
\begin{lemma}\label{apest}
The maximum values of $\partial_{\pm} p$ of the solution $p\in C^2$ are attained on the characteristic boundaries
$\Gamma_{12}\cup\Gamma_{23}.$

Furthermore the solutions $(p, R, S)\in C^1$ satisfy
\[    |t^2 \partial_\pm R|, |t^2 \partial_\pm S| \le C,\]
where $t= \sqrt{ r^2 -c^2(p)}$.
\end{lemma}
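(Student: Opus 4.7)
The plan is to prove the two assertions separately, exploiting the transport structure of \eqref{eqR}--\eqref{eqS}.

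For the maximum principle, starting from the integrated forms \eqref{R_positive}--\eqref{S_positive} along the two characteristic families, I would argue by contradiction. If $R$ attained its supremum at an interior point $P^*$, then along the negative characteristic from its foot $\tilde P \in \Gamma_{12}$ to $P^*$, the exponential factor $\exp\!\int\frak{h}(S-R)\,d_-\theta$ would exceed $1$, forcing $S > R$ somewhere on that segment. A symmetric argument for $S$ along the positive characteristic from $\hat P \in \Gamma_{23}$ to $P^*$ would force $R > S$ somewhere on that segment. Propagating these strict inequalities through the characteristic net, using the positivity and continuity of $R,S$ from Lemma~\ref{Lemma1_increasing} together with the boundary data $S|_{\Gamma_{12}} = 0$, leads to a contradiction. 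The cleanest way to execute this is to apply a joint maximum-principle argument to $\max(R,S)$ treated as a single quantity.

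For the degenerate estimate, the key observation is
\[
\frak{h} \;=\; \frac{r^4(c^2)'}{4 c^4 \lambda^2},\qquad \lambda^2 \;=\; \frac{r^2}{c^2}\,t^2,
\]
so $\frak{h}$ has a simple pole of order $t^{-2}$ at the sonic boundary. Differentiating \eqref{eqR} in the $\partial_+$ direction gives
\[
\partial_-\partial_+ R \;=\; (\partial_+ \frak{h})(S-R)R + \frak{h}(\partial_+ S - \partial_+ R)R + \frak{h}(S-R)\partial_+ R,
\]
and parallel identities hold for $\partial_+\partial_- R$, $\partial_\pm \partial_\mp S$. Since $\partial_\pm \frak{h}$ scales like $t^{-4}$, the derivatives $\partial_\pm R$ and $\partial_\pm S$ blow up at worst like $t^{-2}$, which suggests introducing the weighted quantities
\[
\hat R_{\pm} := t^2\,\partial_{\pm}R,\qquad \hat S_{\pm} := t^2\,\partial_{\pm}S,
\]
together with the identities $\partial_+ t^2 = 2r\lambda - (c^2)'(p)\,R$ and $\partial_- t^2 = -2r\lambda - (c^2)'(p)\,S$ (which follow from $t^2 = r^2 - c^2(p)$, $\partial_\pm r = \pm\lambda$, $\partial_+ p = R$, $\partial_- p = S$) to derive a closed first-order transport system for $(\hat R_\pm,\hat S_\pm)$ whose coefficients remain bounded on $\overline{\mathcal{R}_1}\setminus\{\Xi_3\}$.

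The main obstacle is the algebraic verification that, upon multiplying the differentiated equations by $t^2$, the worst $t^{-2}$ singularities cancel exactly, both those coming from $\partial_\pm\frak{h}$ and those arising from commutator terms $[\partial_+,\partial_-]$. Once this cancellation is in place, the initial data $\hat R_\pm,\hat S_\pm$ on $\Gamma_{12}\cup\Gamma_{23}$ are bounded by the $C^2$ regularity of $(p,R,S)$ there --- explicit on $\Gamma_{12}$ via \eqref{Rrare}, and assumed on $\Gamma_{23}$ in Theorem~\ref{mainth} --- and a Gronwall-type estimate along characteristics, combined with the a priori bound $p_4 \le p \le p_1$ of Lemma~\ref{apriori} (to keep $c^2(p)$ bounded away from zero), yields the uniform estimate $|t^2\partial_\pm R|,\,|t^2\partial_\pm S| \le C$ throughout $\mathcal{R}_1$.
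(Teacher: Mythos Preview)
The paper does not supply its own proof of this lemma; it simply records ``We next cite estimates from \cite{SongZheng}'' and states the result. So the comparison is really between your outline and the Song--Zheng argument that the paper imports.

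Your sketch for the maximum principle is along the right lines, though the two-step version you write first (show $R$ cannot peak in the interior, then symmetrically $S$) does not close: finding a point $Q$ on the negative characteristic with $S(Q)>R(Q)$ is not yet a contradiction, because nothing forces $S$ to attain its maximum at the same point. Your parenthetical remark that one should instead run a joint argument on $\max(R,S)$ is correct and is what is actually needed; carried out carefully (using that at any interior maximum of $\max(R,S)$ one necessarily has $R=S$, and then propagating along both families), it yields the result.

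The substantive gap is in the second estimate. Your proposal rests on the claim that after multiplying by $t^{2}$ the transport system for $(\hat R_{\pm},\hat S_{\pm})$ has \emph{bounded} coefficients, i.e.\ that the $t^{-2}$ singularities cancel exactly. This is false. A direct computation of $\partial_{-}\bigl(t^{2}\partial_{+}R\bigr)$ using $\partial_{-}t^{2}=-2r\lambda-(c^{2})'S$, $\frak{h}\sim (c^{2})'/(4t^{2})$, and $\lambda_{\theta}/\lambda\sim -(c^{2})'p_{\theta}/(2t^{2})$ gives, near $\sigma$,
\[
\partial_{-}\bigl(t^{2}\partial_{+}R\bigr)\;=\;-\frac{7(c^{2})'R}{4t^{2}}\,\bigl(t^{2}\partial_{+}R\bigr)\;+\;O\!\left(\frac{S-R}{t^{2}}\right)\;+\;\text{lower order},
\]
so the leading coefficient is still of order $t^{-2}$. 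What saves the estimate is not cancellation but its \emph{sign}: the coefficient is large and negative, providing strong damping as one integrates toward $\sigma$. Changing variables to $s=t^{2}$ turns the homogeneous part into $d\hat X/ds=(7/4s)\hat X$, whose solutions decay like $s^{7/4}$ as $s\to 0$, and this decay is strong enough to absorb the singular forcing $O((S-R)/s)$ once $R,S$ are known to be bounded from Part~1. This sign/damping mechanism (rather than cancellation to bounded coefficients followed by ordinary Gronwall) is precisely how the Song--Zheng argument proceeds; the paper's later Theorem~\ref{regularity} makes the same structure explicit in $(r,t)$ coordinates, where the equations \eqref{Geq}--\eqref{Heq} for $G,H$ visibly retain $2/t$ and $O(1)/t$ coefficients. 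So your weighted-quantity strategy is the right one, but the step you flag as ``the main obstacle'' will not resolve the way you anticipate, and a plain bounded-coefficient Gronwall will not be available.
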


\section{The existence result in the transient wave region $\Rone$}\label{sectionR_1existence}
We formulate the Goursat boundary problem to construct the transient wave in $\mathcal{R}_1$.
We first discuss the boundary data on the negative characteristic $\Gamma_{23}$.
Let $f\in C^2(\theta_2,\pi/2)$ be convex, and $g\in C^2(\Gamma_f)$ where $\Gamma_f =\{(f(\theta),\theta): \theta_2\le \theta \le \pi/2 \}$ satisfying
\begin{eqnarray}\label{G23}
\frac{d f}{d\theta} &=& -\lambda(f,g) = - f \sqrt{\frac{f^2- c^2(g(f(\theta),\theta))}{c^2(g(f(\theta),\theta))}} \quad {\rm on }\  \Gamma_f,\\
f^2(\theta) &\ge& c^2(g(f(\theta),\theta)) , \label{fg}\\
f(\theta_2)& =& r_2 = \sqrt{c_1c_4}, \label{r2} 
\end{eqnarray}
and 
\begin{eqnarray}\label{pXi2}
g(\Xi_2)&=&p(\Xi_2),\\
\partialm g(f(\theta),\theta)  &=& g_\theta(f(\theta),\theta) - g_r(f(\theta),\theta) \lambda(f(\theta), g(f(\theta),\theta)) > 0, \quad \theta\in (\theta_2,\pi/2),\label{z23}\\
\partialm g(\Xi_2)&=& 0. \label{SXi2}
\end{eqnarray}
The data $R$ on $\Gamma_f$, denoted by $R[f,g]$, is evaluated from 
$\partial_- R = \frak{h}(f,g) (S-R)R$ along $\Gamma_{f}$ with the initial value $R(\Xi_2)$ stated below, and $\partialm g \mid_{\Gamma_{f}}$ from \eqref{z23}. 
That is
\begin{eqnarray}\label{R23}
 R[f,g](\theta) &=& R(\Xi_2) \exp\int^\theta_{\theta_2} \frak{h}(f(z),g(f(z),z)) (\partialm g(f(z),z) - R(f(z),z)) d z,\\
 R(\Xi_2)&=&\frac{4}{\kappa\gamma^{1/\kappa}}\cos\theta_{2}(\sin \theta_{2})^{2/\kappa-1}r_{2}^{2/\kappa}. \label{RXi2}
 \end{eqnarray}

Additionally, we require that $f$ and $g$ satisfy the following compatibility condition:
 \begin{itemize}
 \item[][G1.]
 There exists $\theta_2 <\theta_3<\pi/2$ such that
 \begin{eqnarray}\label{Xi3}
  \lim _{\theta\rightarrow \theta_3} R[f,g](\theta)  =0,\quad \lim_{\theta\rightarrow \theta_3} \partialm g(f(\theta),\theta) =0.
 \end{eqnarray}
 \end{itemize}
 The compatibility condition [G1.] implies   
 \begin{eqnarray}\label{Xi3p}
 \lim_{\Xi \in \Gamma_f \rightarrow \Xi_3} g_\theta(\Xi) =0,\\
 \lim_{\Xi \in \Gamma_f \rightarrow \Xi_3} (f^2- c^2(g))(\Xi) =0.
 \end{eqnarray}  
We note that due to the conditions that $df/d\theta<0$, $c^2(g) \le f^2$ while $g_\theta>0$ (since $\partial_- g>0$ and $R[f,g]> 0$), we have immediate bounds for $f$ and $g$
\begin{eqnarray}\label{fgbounds}
 c_4^2 < c^2(g) \le f^2 \le c_1 c_4.
 \end{eqnarray}
We construct a solution that creates a sonic boundary where $R=S$ becomes zero as the solution approaches $\Xi_3$ on the sonic boundary.

\subsection{Local existence results}
We first discuss local existence results.
Let $W=(p, R, S)$, and write the system to 
\begin{eqnarray}\label{GoursatBP}
W_\theta + A W_r &=& B,
\end{eqnarray} 
where $A=diag (0, -\lambda, \lambda)$ and $B=(\frac{1}{2}(R+S),  \frak{h} (S-R)R,\frak{h}(R-S)S)$.
The eigenvalues are $\lambda_0=0$, $\lambda_{-} = -\lambda$, and $\lambda_+=\lambda$, 
and the corresponding eigenvectors are $l_0=(1,0,0)$, $l_-=(0,1,0)$ and $l_+=(0,0,1)$. 
Let $f,g$ satisfy \eqref{G23} -- \eqref{RXi2} and [G1.] for some $\theta_3\in (\theta_2, \pi/2)$. 
Thus we have the Goursat boundary value problem \eqref{GoursatBP} with the following boundary data
\begin{eqnarray*}
p^+=p \mid_{\Gamma_{12}}= \gamma^{-1/\kappa} \eta^{2/\kappa}, &\quad & p^- =p\mid_{\Gamma_{23}} =g, \\
R^+ = R \mid_{\Gamma_{12}}=  \gamma^{-1/\kappa} \partialp \eta^{2/\kappa}=\frac{4{\lambda_+}p}{\kappa r}, &\quad &R^-= R\mid_{\Gamma_{23}} = R(\Xi_2) \exp\int^\theta_{\theta_2} \frak{h} (\partialm g -R^-) d_-\theta ,\\
S^+ = S\mid_{\Gamma_{12}} =0, &\quad& S^-=S\mid_{\Gamma_{23}}= g_\theta - g_r \lambda(f, g),
\end{eqnarray*}
which we write  $W^+ =(p^+, R^+, S^+) = W\mid_{\Gamma_{12}}$ and $W^-=(p^-,R^-, S^-)=W\mid_{\Gamma_{23}}$.
Clearly $W^+(\Xi_2)= W^-(\Xi_2)$. 

Furthermore, we have
\begin{eqnarray}\label{sonicdata}
p_\theta&=& R=S, \quad {\rm on } \quad \sigma.
\end{eqnarray}

By checking the compatibility conditions of the Goursat boundary value problems  \cite{Bang, DaiZhang, SongZheng},
we establish the local existence result. 
\begin{theorem}\label{localexistence}
Let the Riemann data satisfy $2c(p_4)> c(p_1)$. 
For a given $\Gamma_{23}\in C^2$ convex, and $W=(p, R, S)\in C^1$ on $\Gamma_{23}$ satisfying \eqref{G23} -- \eqref{RXi2}, and the compatibility condition [G1.], there exists a solution 
$W \in C^1 ({\mathcal{R}_1}(t_0))$ to the system \eqref{GoursatBP}, where 
${\mathcal{R}_1}(t_0) = \{ \sqrt{r^2 -c^2(p)}\ge t_0\}$, for $t_0>0$.
\end{theorem}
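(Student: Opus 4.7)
The plan is to treat this as a standard Goursat problem for a semi-linear strictly hyperbolic $3\times 3$ system, carried out away from the sonic boundary. The key observation is that on the truncated region $\mathcal{R}_1(t_0)=\{\sqrt{r^2-c^2(p)}\ge t_0\}$, the eigenvalue $\lambda = r\sqrt{(r^2-c^2)/c^2}$ is uniformly bounded below, and consequently the coefficient $\mathfrak{h}=r^4 (c^2)'/(4c^4\lambda^2)$ stays bounded, so the singular nature of the system near $\sigma$ is avoided. This reduces the problem to a classical setting where Picard iteration will work.

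First, I would introduce characteristic coordinates $(\alpha,\beta)$, with $\alpha$ labeling the foot of the positive characteristic on $\Gamma_{23}$ and $\beta$ labeling the foot of the negative characteristic on $\Gamma_{12}$, and I would use the assumption $2c(p_4)>c(p_1)$ together with the convexity of $\Gamma_{23}$ and the explicit form \eqref{G12} of $\Gamma_{12}$ to verify the geometric non-degeneracy: every point of $\mathcal{R}_1(t_0)$ is reached by exactly one positive and one negative characteristic issuing from the Goursat data. Second, I would rewrite \eqref{eqp}--\eqref{eqS} as a Volterra integral system along the characteristics:
\begin{eqnarray*}
R(\alpha,\beta)&=& R^+(\beta)+\int_{\beta_0(\alpha)}^{\beta}\mathfrak{h}(S-R)R\, d_-\theta,\\
S(\alpha,\beta)&=& S^-(\alpha)+\int_{\alpha_0(\beta)}^{\alpha}\mathfrak{h}(R-S)S\, d_+\theta,\\
p(\alpha,\beta)&=& p^+(\beta)+\int_{\alpha_0(\beta)}^{\alpha}R\, d_+\theta,
\end{eqnarray*}
where the characteristic curves themselves are also updated at each step through $dr/d\theta=\pm\lambda(p)$. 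The existence of a fixed point of this coupled integral system would then be proven by showing that the map is a contraction on a suitable ball in $C^1(\mathcal{R}_1(t_0))$; the needed a priori sup-norm control on $p$ comes from Lemma \ref{apriori}, and the needed control on $\partial_\pm R, \partial_\pm S$ comes from Lemma \ref{apest} (which yields boundedness on $\mathcal{R}_1(t_0)$ because $t^{-2}\le t_0^{-2}$). $C^1$ regularity of the solution then follows from $C^2$ regularity of the Goursat data and the smooth dependence in the integrals.

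Third, compatibility at the corner $\Xi_2$ must be checked: the condition $W^+(\Xi_2)=W^-(\Xi_2)$ reduces to $p^+(\Xi_2)=g(\Xi_2)=p_4$ (imposed in \eqref{pXi2}), $R^+(\Xi_2)=R(\Xi_2)$ (built into \eqref{RXi2}), and $S^+(\Xi_2)=\partial_- g(\Xi_2)=0$ (which is exactly \eqref{SXi2}). These are precisely the data-level compatibility conditions required by Li--Yu/Song--Zheng--type Goursat theory, and they ensure that the $C^1$ extension across the corner is genuine rather than merely one-sided.

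The main obstacle will be the nonlinear coupling through $\lambda(p)$: the characteristics along which one integrates are themselves unknowns, so the iteration must simultaneously update the characteristic foliation and the solution $W$. The standard remedy, which I would follow in the spirit of \cite{SongZheng, Bang}, is a nested iteration — freeze $\lambda$, solve the linear Volterra system, then update $\lambda$ — and show that the outer map is also a contraction provided the Lipschitz constant of $\lambda(p)$ times the diameter of $\mathcal{R}_1(t_0)$ is sufficiently controlled. The uniform lower bound $t\ge t_0$ on the sonic defect is essential here: it keeps $\mathfrak{h}$ Lipschitz in $p$ with a controlled constant, prevents the characteristic fields from becoming tangent, and ensures that the iterates remain inside $\mathcal{R}_1(t_0)$ so the scheme closes up. Passage to the sonic boundary (sending $t_0\to 0$) is not attempted here and is deferred to the global construction of $\sigma$ in the subsequent sections.
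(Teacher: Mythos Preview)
Your approach is correct and in the same spirit as the paper's, but the organization differs in a way worth noting. The paper does not set up a single global contraction on $\mathcal{R}_1(t_0)$; it proceeds in two steps. First, it checks compatibility at $\Xi_2$---both the zeroth-order condition $W^+(\Xi_2)=W^-(\Xi_2)$ that you verify, and a first-order condition for the $p$-component associated with the third eigenvalue $\lambda_0=0$, which reduces to the identity $dp^+/d\theta+dp^-/d\theta=\partial_+p+\partial_-p$---and then invokes classical Goursat theory (citing \cite{Bang,DaiZhang,SongZheng}) for a local solution near the corner. Second, and this is the structural difference, it extends the local solution by solving a sequence of \emph{initial}-boundary value problems whose initial curve $I=\{(\mathfrak r(\theta),\theta)\}$ is a level set $\{r^2-c^2(p)=\text{const}\}$, checking compatibility at $X=I\cap\Gamma_{12}$ and $Y=I\cap\Gamma_{23}$; the choice of $I$ as such a level set guarantees $\mathfrak r'\neq\lambda_\pm$ automatically, so the IBVP is well-posed. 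Your nested contraction scheme and the paper's level-set continuation are equivalent routes to the same endpoint, but the paper's device is not incidental: it is reused verbatim in Lemma~\ref{exlemma} to push the solution down to the sonic boundary, so it buys a unified framework for the global argument that your direct Picard scheme does not set up.
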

\begin{proof}
Let $W^{+}=W\mid_{\Gamma_{12}}$ and $W^-=W\mid_{\Gamma_{23}}$.
Then the compatibility conditions at $\Xi_2$ are the following.  
\begin{eqnarray}\label{comp1}
W^+\mid_{\Xi_2} &=& W^-\mid_{\Xi_2}\\
\frac{1}{\lambda_--\lambda_0} \left( l_0 \frac{dW^- }{d\theta} -\frac{1}{2}(R+S)  \right)\mid_{\Xi_2} &=&\frac{1}{\lambda_+-\lambda_0} \left( l_0 \frac{dW^+ }{d\theta} -\frac{1}{2}(R+S)  \right)\mid_{\Xi_2}.
\end{eqnarray}
The second condition may be rewritten in the form
\[ \frac{dp^+}{d\theta}+\frac{dp^-}{d\theta} = \partial_+ p + \partial _-p.\]
Hence the Goursat problem has a local solution near $\Xi_2$. 

Next we establish a local solution to an initial boundary value problem. Let the initial position be $I=\{(\frak{r}(\theta),\theta)\}$ which is to be determined, and $W_I=W(\frak{r}(\theta),\theta)$. Let $X= I \cap\Gamma_{12}$ and $Y= I \cap\Gamma_{23}$ be the points where the initial and boundary positions meet. Now the compatibility conditions are as follows:
for $i=0,-$, we have 
\begin{eqnarray}
\frac{1}{\lambda_+ -\lambda_i} \left( l_i \frac{dW^+}{d\theta} -  l_i B \right)\mid_X &=&\frac{1}{\frak{r}' -\lambda_i} \left( l_i \frac{dW_I}{d\theta} - l_i B \right) \mid_X;
\end{eqnarray}
and for $i=0,+$, we have
\begin{eqnarray}
\frac{1}{\lambda_- -\lambda_i} \left(l_i \frac{dW^-}{d\theta} - l_i B \right)\mid_Y &=&\frac{1}{\frak{r}' -\lambda_i} \left( l_i \frac{dW_I}{d\theta} - l_i B \right) \mid_Y.
\end{eqnarray}
The initial position $I\in C^1$ is chosen to be the constant level set of $r^2 -c^2(p)$ so that $\frak{r}'\neq \lambda_{\pm}$,  which allows us to match the compatibility conditions. 
Thus we have the local existence result.
\end{proof}

In order to establish the global solution to the entire region $\mathcal{R}_1$, which is  
enclosed by $\Gamma_{12}$, $\Gamma_{23}$ and $\sigma$, we first discuss the regularity results near the sonic boundary due to \cite{WangZheng}. 
We note however that the regularity result is limited to strictly positive $R$ and $S$. 
Hence the estimates depend on $\delta_0$ where $R,S\ge \delta_0>0$.
 Equipped with these regularity results near the sonic boundary, noting that $p_\theta>0$, and the characteristics entering the sonic boundary in the radial direction and never parallel to the tangential direction of the sonic boundary, the sonic boundary $\sigma$ is estimated by the gradient of the pressure which then leads to the existence of a $C^1$ solution and $\sigma \in C^1$.

\subsection{Regularity results}
We discuss the H\"older gradient estimates near the sonic boundary.
The estimates are established by \cite{WangZheng} for the pressure gradient system provided that $\partial_\pm p$ are strictly positive, and under certain smoothness of the solution and the sonic boundary.
While our result relies on \cite{WangZheng}, we provide insights which signify the estimates and their consequences. 

We first change the coordinate system to flatten the sonic boundary as it was done in \cite{WangZheng}.
This new coordinate system brings a couple of technical advantages. The first obvious one is that it simplifies the geometry of the sonic boundary.
The next one is not immediate: the new coordinates enable us to derive the corresponding system that provides estimates on $t^\beta | R_r|, t^\beta |S_r|$, where $t=\sqrt{r^2 -c^2(p)}$, for sufficiently small $0\le t\le t_0$ and uniformly bounded $R, S$ such that $R, S\ge \delta >0$, where $1<\beta=\beta(t_0, \delta)<2$. This is essential to establish the sonic boundary to be in $C^1$.

Since the estimates depend on the strict positiveness of $R, S$, we consider the region $\Rone$ excluding small neighborhoods of $\Xi_1$ and $\Xi_3$, where $\delta$ is the distance from these corner points, see Figure~\ref{corners2}.  
Let $\Rone[\delta]$ be neighborhoods of $\Xi_1$ and $\Xi_3$ with $\delta>0$ distance away from these points 
where $\Gamma_-^\delta$ is the negative characteristic with $\delta =dist (\Xi_1, \Gamma_-^\delta)$, and $\Gamma_+^\delta$ is the positive characteristic with $\delta =dist (\Xi_3, \Gamma_+^\delta)$. 

\begin{figure}[ht]
\psfrag{2}[][][0.7][0]{$\Gamma_{12}$}
\psfrag{1}[][][0.7][0]{$\Xi_1$}
\psfrag{15}[][][0.7][0]{$\eta=c(p_1)$}
\psfrag{16}[][][0.7][0]{$\eta=c(p_4)$}
\psfrag{3}[][][0.7][0]{$\Xi_2$}
\psfrag{5}[][][0.7][0]{$\Xi_3$}
\psfrag{14}[][][0.7][0]{$\Gamma_{+}^{\delta}$}
\psfrag{13}[][][0.7][0]{$\Gamma_{-}^{\delta}$}
\psfrag{4}[][][0.7][0]{$\Gamma_{23}$}
\begin{center}
\includegraphics[height = 1.5in,width = 2.5in]{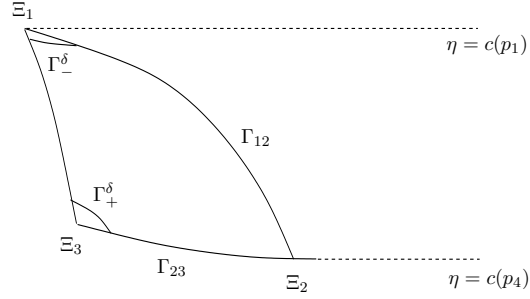}
\end{center}
\vspace*{-4mm}
\caption{$\Rone\setminus \Rone[\delta]$, the region $\Rone$ excluding the small neighborhoods of $\Xi_1$ and $\Xi_3.$ }
\label{corners2}
\end{figure}

The H\"older gradient estimates are established in the region $\Rone\setminus \Rone[\delta]$, in particular near the sonic boundary.
Let $p$ be smooth enough to derive the characteristic equations for the derivatives (for simplicity we may assume $p \in C^3(\mathcal{R}_1)$ for now, which can be weaken).
Let $t=\sqrt{r^2 -c^2}$, and consider the new coordinate system $(r,t)$.
From simple calculations, 
\[\partial_\theta = \frac{-(c^2)' p_\theta}{2t} \partial_t, \ \
\partial_r = \partial_r + \frac{2r -(c^2)' p_r}{2t} \partial_t,\]
 the system in $(r,\theta)$ coordinates becomes
\begin{eqnarray*}
p_t &=& -\frac{2t}{(c^2)'}\\
R_t +\frac{2t \lambda}{(c^2)'S +2r \lambda} R_r &=& \frac{2t}{(c^2)' S + 2r \lambda} \frak{h}(R-S)R,\\
S_t -\frac{2t \lambda }{(c^2)' R - 2r \lambda} S_r &=& \frac{2t}{(c^2)' R -  2r \lambda} \frak{h}(S-R)S,
\end{eqnarray*}
where
\[ \lambda(t,r)=\frac{tr}{\sqrt{r^2 -t^2}}, \quad \frak{h}(t,r)= \frac{r^2 (c^2)'}{4t^2(r^2 -t^2)}.\]
The corresponding characteristics read
\begin{eqnarray*}
\frac{dr_-}{dt} =\frac{2t \lambda}{(c^2)'S + 2r\lambda}, && \frac{dr_+}{dt} =\frac{- 2t \lambda}{(c^2)' R - 2r\lambda}.
\end{eqnarray*}
and the last two equations can be rewritten as
\begin{eqnarray}\label{Rchar}
\frac{dR}{d_{-}t}&=R_t+\dfrac{dr_{-}}{dt}R_r =& \frac{2t}{(c^2)' S + 2r \lambda}\frak{h}(R-S)R, \\
\frac{dS}{d_{+}t}&=S_t+\dfrac{dr_{+}}{dt}S_r = & \frac{2t }{(c^2)' R -  2r \lambda} \frak{h}(S-R)S.\label{Schar}
\end{eqnarray}
As discussed in \cite{WangZheng}, the following equations will be useful to establish the estimates.

Let
\begin{eqnarray}\label{V}
 V=\frac{1}{S}-\frac{1}{R},
 \end{eqnarray}
and obtain
\begin{eqnarray}\label{Vteq}
V_t &=& \mu_0 \frac{V}{t} +\mu_1(r,t) R_r t^2 +\mu_2(r,t) S_r t^2
\end{eqnarray}
where
\begin{eqnarray*}
\mu_0(r,t)&=& \frac{ 4t^2 \frak{h} ((c^2)' + r\lambda V)}{((c^2)' + 2r\lambda S^{-1})((c^2)' - 2r\lambda R^{-1})}\\
\mu_1(r,s)&=& -\frac{2r}{R^2\sqrt{r^2 -t^2}} \frac{1}{(c^2)'S + 2r\lambda} \\
\mu_2(r,t)&=&  -\frac{2r}{S^2\sqrt{r^2 -t^2}} \frac{1}{(c^2)'R - 2r\lambda}.
\end{eqnarray*}
Let
\begin{eqnarray}\label{G}
G&=&\frac{dR}{d_{+}t} - \frac{dR}{d_{-}t} =-2 t^2 e(r,t) R_r  \\
H&=&\frac{dS}{d_{+}t} - \frac{dS}{d_{-}t} =-2t^2 e(r,t) S_r, \label{H}
\end{eqnarray}
and 
\begin{eqnarray*}
e(r,t)&=& \frac{r}{\sqrt{r^2 -t^2}}\left( \frac{1}{(c^2)' R - 2r \lambda} + \frac{1}{(c^2)' S + 2r \lambda}  \right).
\end{eqnarray*}
Evaluate and write
\begin{eqnarray}
\frac{dG}{d_{-}t}
&=& \left(\frac{2}{t} +l(r,t)\right) G + \frac{\tilde f_1(r,t)}{2t} G -\frac{\tilde f_2(r,t)}{2t} H + f_3(r,t) t+ f_4(r,t)t^2,\label{Geq}
\end{eqnarray}
where
\begin{eqnarray*}
l(t) &=& \frac{t}{r^2 -t^2} + \frac{1}{((c^2)' R-2r\lambda)((c^2)' S + 2r\lambda)} \\
&& \times 
\left(  \frac{ (c^2)''}{(c^2)' (R+S)}\frac{2t}{(c^2)'} \left[  \left[ R ((c^2)' S + 2r\lambda)^2 + S((c^2)' R-2r\lambda)^2 \right]
- \frac{(c^2)' r^2(R-S)^2}{\sqrt{r^2 -t^2}}  \right] \right. \\
&& \left. -  \frac{4t^2 \frak{h} r^2 (R-S)}{\sqrt{r^2 -t^2}} 
-\frac{2r^4( (c^2)'(R-S) - 4r\lambda) }{(r^2 -t^2)^{3/2} }
- 4t  \left(\frac{r^2 t^2}{r^2 -t^2}  -\frac{r^2t^4}{(r^2 -t^2)^{2}} \right)
\right),
 \end{eqnarray*}
 and
\begin{eqnarray*}
\tilde f_1(r,t)&=&\frac{4t^2\frak{h}R}{(c^2)' S + 2r \lambda} + \frac{ 4t^2 \frak{h} (R-S)}{ (c^2)' S + 2r \lambda}\\
\tilde f_2(r,t)&=& \frac{4t^2\frak{h}R}{(c^2)' S + 2r \lambda},\\
f_3(r,t) &=&  -e(r,t) \frac{R(R-S)}{(c^2)' S + 2r \lambda} 
\left( \frac{4t^2 \frak{h} (c^2)'' S(R-S) }{2\lambda((c^2)' S + 2r \lambda)} 
+\frac{(c^2)'' r^2 (R-S)}{2\lambda(r^2 -t^2)} + \frac{2r (c^2)'}{ r^2 -t^2} -\frac{2r^3 (c^2)'}{(r^2 -t^2)^2}\right)\\
f_4(r,t)&=& - e(r,t) \frac{4t^2 \frak{h} R(R-S)}{((c^2)' S + 2r\lambda)^2} \left( \frac{2r}{\sqrt{r^2 -t^2}}  - \frac{2rt^2}{(r^2 -t^2)^{3/2}}  \right).
\end{eqnarray*}
Similarly we also have
\begin{eqnarray}
\frac{dH}{d_{+}t} 
&=& \left(\frac{2}{t} +l(r,t)\right) H -\frac{\tilde g_1(r,t)}{2t} G + \frac{\tilde g_2(r,t)}{2t} H + g_3(r,t) t+ g_4(r,t)t^2,\label{Heq}
\end{eqnarray}
where
\begin{eqnarray*}
\tilde g_1(r,t)&=& \frac{4t^2\frak{h}S}{(c^2)' R- 2r \lambda} \\
\tilde g_2(r,t)&=&\frac{4t^2\frak{h}S}{(c^2)' R- 2r \lambda}  +\frac{4t^2\frak{h}(S-R)}{(c^2)' R- 2r \lambda},\\
g_3(r,t) &=&  - e(r,t)  \frac{S(S-R)}{(c^2)' R- 2r \lambda} \left(  \frac{ 4t^2 \frak{h} (c^2)''  R(R-S)}{2\lambda ((c^2)' R -  2r \lambda)} 
+\frac{2r (c^2)'}{r^2-t^2} +  \frac{(c^2)'' r^2 (R-S)}{2\lambda(r^2-t^2)}  - \frac{ 2r^3(c^2)'}{ (r^2 -t^2)^2} \right) \\
g_4(r,t)&=& -e(r,t) \frac{ 4t^2 \frak{h} S(S-R)}{((c^2)' R -  2r\lambda)^2} \left(-\frac{2r}{\sqrt{r^2 -t^2}}  + \frac{2r t^2}{(r^2 -t^2)^{3/2} }  \right).
\end{eqnarray*}
We now establish, in the same spirit as in \cite{WangZheng}, the following regularity result.
\begin{theorem}\label{regularity}
For given Riemann data  $2c(p_4)> c(p_1)$, $t_0>0$ sufficiently small, and $\delta>0$, 
there exist a positive constant $C$ and $\beta\in(1,2)$ depending only on $t_0, \delta$, the Riemann data and $\max_{\Gamma_{12}\cup\Gamma_{23}} (R, S)$, such that 
the solutions $R, S\in C^1(\mathcal{R}_1\setminus\Rone[\delta])$ satisfy 
\begin{eqnarray}\label{Holderestimates}
 |R_t|,  |S_t|,  t^\beta |R_r|,  t^\beta |S_r| \le C \delta^{-1}, \quad \forall t\le t_0.
\end{eqnarray}
Moreover, $R, S$ and $t^{-1}(R-S)$ are uniformly continuous in $\mathcal{R}_1\setminus \Rone[\delta]$, 
that is, $R-S= O(\sqrt{r^2 -c^2(p)})$, and consequently $\sigma \cap \overline {\mathcal{R}_1\setminus \Rone[\delta]} \in C^1$.
\end{theorem}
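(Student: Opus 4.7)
The plan is to adapt the approach of \cite{WangZheng}, developed for the pressure-gradient system, to the present nonlinear wave system. In the flattened $(r,t)$ coordinates with $t = \sqrt{r^2 - c^2(p)}$, the sonic boundary $\sigma$ coincides with $\{t=0\}$. We work with the characteristic equations \eqref{Rchar}-\eqref{Schar} together with the auxiliary equation \eqref{Vteq} for $V = 1/S - 1/R$ and the system \eqref{Geq}-\eqref{Heq} for $G = -2t^2 e R_r$ and $H = -2t^2 e S_r$. On $\{0 < t \le t_0\} \cap (\Rone \setminus \Rone[\delta])$, Lemmas~\ref{Lemma1_increasing}, \ref{pointxi3} and \ref{apriori} guarantee $R, S \ge \delta_1(\delta) > 0$ and $p_4 \le p \le p_1$, so every denominator $(c^2)'S + 2r\lambda$ and $(c^2)'R - 2r\lambda$ appearing in the coefficients $\mu_i, \tilde f_i, \tilde g_i, l, e$ stays uniformly bounded above and away from zero.

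The first step is to show $V = O(t)$, equivalently $R - S = O(\sqrt{r^2-c^2(p)})$. Direct computation using $4t^2\frak{h} = r^2(c^2)'/(r^2-t^2)$ and $\lambda = tr/\sqrt{r^2-t^2}$ gives $\mu_0 \to 1$ as $t \to 0$, so \eqref{Vteq} rewrites as $(V/t)_t = O(1)(V/t) + (\mu_1 R_r + \mu_2 S_r)\,t$. Under the bootstrap assumption $t^\beta |R_r|, t^\beta |S_r| \le M \delta^{-1}$ for some $\beta \in (1,2)$, the forcing is $O(\delta^{-1} t^{1-\beta})$, which is integrable, and integrating along the vertical $0$-characteristic $r = \text{const}$ from $t_0$ down to $t$ with $|V(t_0)| \le C\delta^{-1}$ yields $|V(t)| \le C\delta^{-1} t$.

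The second and technically central step is the estimate on $G$ and $H$. The coefficients satisfy $\tilde f_i, \tilde g_i \to 1$ as $t \to 0$ with $\tilde f_1 - \tilde f_2 = O(R-S) = O(t)$ and $\tilde g_2 - \tilde g_1 = O(t)$, so the a~priori singular terms $(\tilde f_1 G - \tilde f_2 H)/(2t)$ and $(\tilde g_2 H - \tilde g_1 G)/(2t)$ reorganize into a single $\frac{1}{2t}(G - H)$ modulo bounded contributions. The decay $R - S = O(t)$ from Step~1 ensures that the $1/t$ piece of the matrix coupling $(G,H)$ has a null direction along $G = H$, so only the symmetric combination $\Phi = G + H$ carries the maximal singular growth $(2/t)\Phi$, while $\Psi = G-H$ satisfies an equation with bounded coefficients plus an $O(t)$ forcing. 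Applying Gronwall along the $\pm$-characteristics with the integrating factor $t^{-(2-\alpha)}$ for $\alpha > 0$ small on a short strip $0 < t \le t_0(\delta)$, starting from data of size $\delta^{-1}$ at $t = t_0$, produces $|G|, |H| \le C\delta^{-1} t^{2-\beta}$ for some $\beta \in (1,2)$. Since $e$ is uniformly bounded above and below, this gives $t^\beta |R_r|, t^\beta |S_r| \le C\delta^{-1}$, closing the bootstrap. The bounds on $R_t, S_t$ then follow immediately from \eqref{Rchar}-\eqref{Schar}, since the right-hand sides equal $2t\frak{h}(R-S)R/((c^2)'S + 2r\lambda) = O(t) \cdot O(t^{-2}) \cdot O(t) = O(1)$ while the drifts $(dr_\pm/dt) R_r = O(t^2) \cdot O(t^{-\beta}) = o(1)$.

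Finally, the $C^1$ regularity of $\sigma \cap \overline{\Rone \setminus \Rone[\delta]}$ follows by applying the implicit function theorem to $r^2 - c^2(p) = 0$: $p_\theta = (R+S)/2 > 0$ on the interior of $\sigma$ by Lemmas~\ref{Lemma1_increasing} and \ref{intersectionpoint}, and the estimates above extend $p_r, p_\theta$ continuously up to $\sigma$, so $\sigma$ is locally a $C^1$ graph $r = \tau(\theta)$. The main obstacle is the coupled bootstrap between $V$ and $(G,H)$: the decay $R-S=O(t)$ is needed to tame the $1/t$ cross interactions in the $G, H$ equations, while the $t^\beta$-weighted bounds on $R_r, S_r$ feed back into the $V$ estimate. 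Closing this feedback loop on a strip whose width $t_0$ depends only on $\delta$, the Riemann data, and $\max_{\Gamma_{12}\cup\Gamma_{23}}(R, S)$ is the technical heart of the argument.
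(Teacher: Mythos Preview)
Your overall architecture---flatten to $(r,t)$ coordinates, control $V=1/S-1/R$ via \eqref{Vteq}, and estimate $G,H$ via \eqref{Geq}--\eqref{Heq}---matches the paper and \cite{WangZheng}. But Step~2 contains a concrete error. With $\tilde f_i,\tilde g_i\to 1$ as $t\to 0$, the singular part of the $(G,H)$ system is
\[
\frac{1}{t}\begin{pmatrix} 2+\tfrac12 & -\tfrac12\\[2pt] -\tfrac12 & 2+\tfrac12\end{pmatrix}
\begin{pmatrix}G\\H\end{pmatrix},
\]
whose eigenvalues are $2/t$ along $(1,1)$ and $3/t$ along $(1,-1)$. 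So $\Psi=G-H$ carries the \emph{larger} singular coefficient $3/t$, not bounded coefficients as you assert; the null direction of the off-diagonal block is $\Phi$, not $\Psi$. In addition, $G$ evolves along $d_-$ while $H$ evolves along $d_+$, so $\Phi$ and $\Psi$ do not satisfy clean transport equations along either family, and the Gronwall step you describe is not well posed as written.

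The paper avoids both issues by skipping the $\Phi,\Psi$ decomposition entirely. It sets $M=\max\{t^{-\beta}|G|,\,t^{-\beta}|H|\}$ and integrates \eqref{Geq} along $d_-$ (and \eqref{Heq} along $d_+$) with the integrating factor coming from the $2/t$ diagonal, bounding the cross terms $\tilde f_i H/(2t),\,\tilde g_i G/(2t)$ crudely by $F_0 M\tau^{\beta-1}$. After integration this produces a self-referencing inequality of the form $M\le C_1+C_0 M$ with $C_0<1$ once $t_b\le t_0$ is small enough, yielding $t^\beta|R_r|,\,t^\beta|S_r|\le C$. Only \emph{after} this bound is in hand does the paper feed it into \eqref{Vteq} to get $|V/t|\le C$ and hence $R-S=O(t)$; your bootstrap reverses this order and leaves the closure unclear. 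The final uniform continuity of $R,S,V/t$ up to $t=0$ and the $C^1$ regularity of $\sigma$ then follow essentially as you outline.
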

\begin{proof}
Recall that 
\begin{eqnarray*}
p_\theta = \frac{R+S}{2},&&p_r =\frac{R-S}{2\lambda^{-1}} =\frac{R-S}{t} \frac{\sqrt{r^2 -t^2}}{2r}.
\end{eqnarray*}
Hence in order to have the sonic line to be in $C^1$, we show that $(R-S)/t$ is uniformly bounded, and $R, S$ and $(R-S)/t$ are uniformly continuous in 
$\mathcal{R}_1\setminus \Rone[\delta]$,

For each fixed $t=t_b>0$, we consider the level curve $\{ r^2 -c^2(p(r,\theta)) =t^2_b \}$ where $\theta=\theta(r)$.
We then have 
\[ \theta'(r)=\frac{2r- (c^2)' p_r}{ (c^2)' p_\theta}.\]
Since $R$ and $S$ are positive and bounded in region $\mathcal{R}_1$, we have $p_\theta =(R+S)/2$ positive and bounded.
Thus $\theta'(r)$ is well-defined on each level curve.

Recall that $V=1/S -1/R$ and $V$ satisfies \eqref{Vteq}.
We have $m_i$, $i=0,1$ positive constants such that 
$|(\mu_0-1)/t|\le m_0 \delta^{-1} $, and $|\mu_j|\le m_1 \delta^{-3}$ where $j=1,2$, 
 in region $\mathcal{R}_1\setminus\Rone[\delta]$.  
Hence we can write equation \eqref{Vteq}  in the form
\begin{eqnarray*}
&&\partial_t \left( \frac{V(r,t)}{t} \exp \left(\int^{t_b}_t \frac{\mu_0-1}{\tau} d\tau \right)\right) = \left[ \mu_1(r,t) R_r t +\mu_2(r,t) S_r t \right]\exp \left(\int^{t_b}_t \frac{\mu_0-1}{\tau} d\tau\right).
\end{eqnarray*}
Integrating the last equation from $t$ to $t_b$, we have
\begin{align*}
\frac{V}{t}\mid_{t=t_b} &-  \frac{V(r,t)}{t} \exp \left(\int^{t_b}_t \frac{\mu_0-1}{\tau} d\tau \right) \\
&= \int^{t_b}_t \left[ \mu_1(r,\tau ) R_r \tau +\mu_2(r,\tau) S_r \tau \right]\exp \left(\int^{t_b}_\tau \frac{\mu_0-1}{\sigma} d\sigma\right)d\tau,
\end{align*}
which implies
\begin{align} \nonumber
\exp(-m_0 \delta^{-1} & t_b) \left|\frac{V(r,t)}{t}\right| \\
&< \exp \left(-\int^{t_b}_t \frac{\mu_0-1}{\tau} d\tau \right)  \left|\frac{V(r,t)}{t} \right|\\ \nonumber
&\le \left|\frac{V}{t} \mid_{t=t_b} \right| + \int^{t_b}_t | \mu_1(r,\tau ) R_r \tau  +\mu_2(r,\tau) S_r  \tau|  \exp \left(\int^{t_b}_\tau \left| \frac{\mu_0-1}{\sigma} \right| d\sigma\right)d\tau\\
&\le  \left|\frac{V}{t} \mid_{t=t_b} \right| +  \exp(m_0 \delta^{-1} t_b) m_1 \delta^{-3} \int^{t_b}_t  \tau |R_r|   + \tau | S_r|   d\tau. \label{Vt}
\end{align}
Thus we first establish the estimates on $ tR_r $ and $tS_r $ to obtain the bound of $V/t$.

Recall $G=-2t^2 e(r,t) R_r$ and $H=-2t^2 e(r,t) S_r$, satisfying \eqref{Geq} and \eqref{Heq} respectively.
It is important to observe that the singular terms are with $G$ and $H$ of order $O(t^{-1})$ while the remaining terms are of order $O(t)$.

We first establish the estimates of $G$ and $H$ from \eqref{Geq} and \eqref{Heq}. 
We define 
\[ M=\max\{ t^{-\beta} |G|, t^{-\beta} |H|\},\]
where $\beta>1$ and $M$ are positive constants to be determined.

From \eqref{Geq} we write 
\begin{align*}
\frac{d}{d_-t} \bigg( G(r,t) &\exp\int^t_{t_b} -\left(\frac{2}{\tau} + l(r_-(\tau),\tau)    \right) d\tau \bigg)\\
=& 
\left( \frac{\tilde f_1(r_-(t),t )}{2t } G -\frac{\tilde f_2(r_-(t),t)}{2t} H + f_3(r_-(t),t) t+ f_4(r_-(t),t)t^2\right) \\
& \times\exp\int^t_{t_b} -\left(\frac{2}{\tau} + l(r_-(\tau),\tau)   \right) d\tau.
\end{align*}
Integrate the last equation along the minus characteristic curve passing through $(r_0, \eps_0)$ to get
\begin{eqnarray*}
&&G(r,t) \left(\frac{t_b}{t}\right)^{2}  \exp\int^{t_b}_t  l(r_-(\tau),\tau)  d\tau \\
&=&
G(r, t_b) \\
&&+ \int^{t_b}_t \left(-\frac{\tilde f_1(r_-(\tau),\tau )}{2\tau } G + \frac{\tilde f_2(r_-(t),\tau )}{2\tau}  H - f_3(r_-(t),\tau) \tau - f_4(r_-(t),\tau) \tau^2\right) \\
&& \times \left(\frac{t_b}{\tau}\right)^{2}  \exp\int^{t_b}_\tau  l(r_-(\sigma),\sigma)  d\sigma d\tau.
\end{eqnarray*}
We then deduce
\begin{eqnarray*}
&&G(r,t) \left(\frac{t_b}{t}\right)^{2}  \exp\int^{t_b}_t  l(r_-(\tau),\tau) d\tau\\
&\le &
G(r, t_b) \\
&&+  \int^{t_b}_t \left[ \left(\frac{|\tilde f_1(r_-(\tau),\tau )| }{2}  + \frac{| \tilde f_2(r_-(\tau),\tau )|}{2 } \right) M \tau^{\beta-1} \right.\\
 &&\left. + |f_3(r_-(\tau),\tau)| \tau+  |f_4(r_-(\tau),\tau)|\tau^2 \right]  \left(\frac{t_b}{\tau}\right)^{2}  \exp\int^{t_b}_\tau  l(r_-(\sigma),\sigma)  d\sigma d\tau.
\end{eqnarray*}
Observe that for $t_0>0$, there exist positive constants $L_0$ and $F_0$ such that 
$|l(r,t)|\le L_0 \delta^{-1} $, $\tilde f_1, \tilde f_2\le F_0$ and $ |f_3|, |f_4| \le F_1 \delta^{-1} $ for $t \le t_0$.
Hence for $t\le t_b \le t_0$, we get from the last inequality
\begin{eqnarray*}
&&G(r,t) \left(\frac{t_b}{t}\right)^{2}  \exp (L_0 \delta^{-1} (t- t_b)) \\
&\le &
G(r, t_b) \\
&&+ t_b^2  \int^{t_b}_t \left( F_0 M \tau^{\beta -3}    +  F_1 \delta^{-1} (\tau^{-1} +1) \right) \exp(L_0 \delta^{-1} (t_b-\tau))     d\tau\\
&\le & G(r, t_b) \\
&&+ t_b^{2} \exp(L_0\delta^{-1}  t_b) \frac{1}{2-\beta} F_0M  \left( \frac{1 }{t^{2-\beta}} -\frac{1}{t_b^{2-\beta}} \right)  
+ t_b^{2} \exp(L_0\delta^{-1}  t_b) F_1\delta^{-1}  ( \ln(t_b) -\ln t +t_b-t).
\end{eqnarray*}
We then deduce 
\begin{eqnarray*}
G(r,t) t^{-\beta} &\le & G(r,t_b) t_b^{-2} t^{2-\beta}  \exp (L_0 \delta^{-1}  t_b)
+ \exp(2 L_0 \delta^{-1}  t_b) \frac{1}{2-\beta} F_0 M  \left( 1 -\frac{t^{2-\beta}}{t_b^{2-\beta}} \right)  \\
&&+ \exp( 2 L_0\delta^{-1}  t_b) F_1\delta^{-1}  ( \ln(t_b) -\ln t +t_b-t)t^{2-\beta}.
\end{eqnarray*}
Thus for $1<\beta<2$, by choosing $t_b\le t_0$ sufficiently small if necessary, we then have
\begin{eqnarray*}
\frac{ \exp(2 L_0 \delta^{-1}  t_b)  F_0}{2-\beta}  \left( 1 -\frac{t^{2-\beta}}{t_b^{2-\beta}} \right) &\le & C_0 <1
\end{eqnarray*}
for all $t \le t_b$. Hence we now have established the bound of $M$;
\begin{eqnarray*}
M &\le & \frac{C_1}{1-C_0},
\end{eqnarray*}
where 
\[ C_1 =\max_{\{ t\le t_b\} }[ G(r,t_b) t_b^{-2} + F_1 ( \ln(t_b) -\ln t +t_b-t) \exp (L_0\delta^{-1}  t_b)] \exp (L_0 \delta^{-1}  t_b)  t^{2-\beta}.\]
This uniform bound (similarly to $H$) holds for any $1<\beta<2$, and immediately gives 
\[t^\beta |R_r|, t^\beta|S_r| \le M.\]
Now with this uniform bound, from inequality \eqref{Vt} for $V/t$, noting $|e|\le E_0 \delta^{-1}$, we have 
\begin{eqnarray*}
 \left|\frac{V(r,t)}{t}\right| 
&\le & \exp(m_0  \delta^{-1} t_b) \left|\frac{V}{t} \mid_{t=t_b} \right| +  \exp(2 m_0 t_b) m_1 \delta^{-3}  \int^{t_b}_t  \tau| R_r|   + \tau| S_r|   d\tau\\
&\le & \exp(m_0  \delta^{-1}  t_b) \left|\frac{V}{t} \mid_{t=t_b} \right| +  \exp(2 m_0 t_b) m_1\delta^{-3} 
\int^{t_b}_t  \tau^{\beta -1} M   \frac{1}{e(r,\tau)}  d\tau\\
&\le&  \exp(m_0  \delta^{-1} t_b) \left|\frac{V}{t} \mid_{t=t_b} \right|+ \exp(2 m_0   \delta^{-1}t_b) \frac{m_1 \delta^{-2}  M}{\beta E_0} (t_b^{\beta} -t^{\beta})\\
&\le & M_1 \delta^{-2}.
\end{eqnarray*}
Hence we get
\[ \left|\frac{R-S}{t}\right|  \le M_1 RS \delta^{-2} \le CM_1.\]

We next show that $R$, $S$ and $V/t$ are uniformly continuous.
We now have
\begin{eqnarray*}
\left|\frac{dR}{d_-t}\right| &=& \left| \frac{2t^2 \frak{h} R}{ (c^2)' S + 2r\lambda^{-1}} \frac{R-S}{t} \right| \le CM_1\\
\left|\frac{dS}{d_+t}\right| &=& \left| \frac{2t^2 \frak{h} S}{ (c^2)' R- 2r\lambda^{-1}} \frac{S-R}{t} \right| \le CM_1,
\end{eqnarray*} 
for some constant $C>0$ uniformly in $t$.
Hence integrate the last inequalities along the negative and positive characteristics respectively to obtain
\begin{eqnarray*}
|R(r_1, 0) -R(r_0, t_0)| &\le & CM_1 t_0\\
|S(r_2, 0) -S(r_0, t_0)| &\le & CM_1 t_0,
\end{eqnarray*}
Since $R=S$ along the sonic line and both $R$ and $S$ are continuous inside region $\mathcal{R}_1$, we have
\begin{eqnarray*}
|R(r_1, 0)- R(r_2, 0)|, |S(r_1, 0)- S(r_2, 0)|&\le& 2CM_1 t_0 + |R(r_0, t_0) - S(r_0, t_0)|\rightarrow 0
\end{eqnarray*}
as $|r_1-r_2|\rightarrow 0$. Thus $R$ and $S$ are continuous on the sonic line and uniformly continuous in $\mathcal{R}_1\setminus \Rone[\delta]$.

Next, integrating \eqref{Vteq} from $0$ to $t_b$ where $t_b$ is arbitrary chosen,
we have
\begin{align*}
\frac{V}{t}\mid_{t=t_b} &-  \frac{V}{t}\mid_{t=0} \exp \left(\int^{t_b}_0 \frac{\mu_0-1}{\tau} d\tau \right)\\
 &= \int^{t_b}_0 \left[ \mu_1(r,\tau ) R_r \tau +\mu_2(r,\tau) S_r \tau \right]\exp \left(\int^{t_b}_\tau \frac{\mu_0-1}{\sigma} d\sigma\right)d\tau,
\end{align*}
 which then becomes
 \begin{eqnarray*}
&& \left|  \frac{V}{t}\mid_{t=t_b} -  \frac{V}{t}\mid_{t=0}  \right|\\
&\le & \left| \frac{V}{t}\mid_{t=0}  \right|  \left( \exp  \int^{t_b}_0 \left|\frac{\mu_0  \delta^{-1} -1}{\tau}\right|  d\tau -1 \right)
+ m_1 \delta^{-3}  M  \int^{t_b}_0 \tau^{\beta-1}  \exp \left(\int^{t_b}_\tau \frac{\mu_0  \delta^{-1} -1}{\sigma} d\sigma\right)d\tau\\
&\le & 2M_1 e^{m_0  \delta^{-1} t_b} t_b +  \frac{m_1\delta^{-3} M e^{m_0  \delta^{-1} t_b} t_b^{\beta}}{\beta} \le M_2 t_b.
\end{eqnarray*}
Hence with $M_2 t_b \le \eps/4$ (take $\eta>0$ such that if $|r_1-r_2|\le \eta$ then 
$\left|  \frac{V}{t}(r_1, t_b) -  \frac{V}{t}(r_2,t_b)  \right|\le \eps/4$)
we have
\begin{eqnarray*}
 \left|  \frac{V}{t}(r_1, 0) -  \frac{V}{t}(r_2,0)  \right|
 &\le& \left|  \frac{V}{t}(r_1, 0) -  \frac{V}{t}(r_1, t_b)  \right| 
 + \left|  \frac{V}{t}(r_1, t_b) -  \frac{V}{t}(r_2,t_b)  \right|\\
 && +\left|  \frac{V}{t}(r_2, b) -  \frac{V}{t}(r_2,0)  \right| \le \eps
\end{eqnarray*}
for $|r_1-r_2|\le \eta$.
Thus $V/t$ is also uniformly continuous in $\mathcal{R}_1\setminus \Rone[\delta]$.

Therefore we have established the claim.
\end{proof}
\subsection{The supersonic solution in region $\Rone$}
The existence result in the entire region $\Rone$ is established in two steps.
We first show Lemma~\ref{exlemma} to establish the sonic boundary $\sigma$ where $R=S$.
We next show that $\overline\sigma\cap \Gamma_{12}=\Xi_1$ and $\overline\sigma\cap \Gamma_{23}=\Xi_3$.

%
The proof of lemma~\ref{exlemma} is inspired by the work in  \cite{Bang, DaiZhang}.
\cite{DaiZhang} and later \cite{Bang},  established the global existence result for the degenerate hyperbolic system of the pressure gradient equation, where the pressure becomes zero at the origin which makes the system degenerate only at the origin and hyperbolic elsewhere. Furthermore since the pressure gradient system is quasilinear, the type of the system (whether supersonic or subsonic) must be also identified. 
While our system may appear to have  similar technical difficulties as in \cite{Bang, DaiZhang}, we note that it is not straightforward to apply their result to our case. We also note that the sonic boundary will be determined by the choice of the data on $\Gamma_{23}$ and thus the compatibility conditions at $\Xi_1$ and $\Xi_3$ will play crucial roles in selecting the correct data on $\Gamma_{23}$ to find the supersonic solution in the entire region $\Rone$.

Let $u=r^2 -c^2(p)$ so that $u=0$ on the sonic boundary $\sigma$.
The characteristic equations in $u$ become
\begin{eqnarray}\label{up}
\partial_+ u = u_\theta + \lambda_+ u_r &=& 2\frac{r^2}{c(p)} \sqrt{u} -(c^2)' R, \\
\partial_- u = u_\theta +\lambda_- u_r &=& -2\frac{r^2}{c(p)} \sqrt{u} -(c^2)' S. \label{um}
\end{eqnarray}
Thus we have
\begin{eqnarray}\label{utheta}
u_\theta &=& -\frac{(c^2)'}{2} (R+S),\\
u_r &=& 2r -  \frac{(c^2)'c}{r} \left(\frac{R-S}{\sqrt{u}}\right).\label{ur}
\end{eqnarray}

\begin{lemma}\label{exlemma}
For each $d$, where $0<d \le d_m < c^2(p_1)$, there exists a level curve 
$l_\tau=\{(\tau(\theta), \theta): u(\tau(\theta),\theta)=d\} \subset {\mathcal{R}_1}$ and $  \tau=\tau(\theta)\in C^1$ such that 
the following hold:
\begin{enumerate}
\item there exists $U= (u, R, S) \in C^1(D_\tau)$ satisfying the Goursat boundary problem \eqref{eqR},\eqref{eqS},\eqref{utheta},
\item Only the positive family of characteristics intersects with $\Gamma_{23}$. Only the negative family of characteristics intersect with $\Gamma_{12}$,
\end{enumerate}
where 
$D_\tau \subset {\mathcal{R}_1}$ 
is the closed domain enclosed by $\Gamma_{12}$, $\Gamma_{23}$ and $l_\tau$.
\end{lemma}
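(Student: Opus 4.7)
The plan is a continuation argument in the front-level parameter $d$. Theorem~\ref{localexistence} provides a $C^1$ solution on $\{t \geq t_0\}$ for some $t_0 > 0$, giving an initial admissible level $d = t_0^2$; the idea is to advance the front level curve $l_\tau = \{u = d\}$ downward through $(0, d_m]$ using the uniform estimates of Lemmas~\ref{apriori}--\ref{apest}. For openness, suppose a $C^1$ solution is defined on $D_{\tau'}$ with $u = d'$ on $l_{\tau'}$. Since $\tau'(\theta) = -u_\theta/u_r$, the non-characteristic condition for $l_{\tau'}$ reduces to $\partial_\pm u \neq 0$. By \eqref{um}, $\partial_- u = -2(r^2/c)\sqrt{d'} - (c^2)' S < 0$ (using $S > 0$ from Lemma~\ref{Lemma1_increasing}), so the negative family is automatically transverse. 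By \eqref{up}, the positive family is tangent to $l_{\tau'}$ only on the locus $\{2r^2\sqrt{u} = (c^2)' R c\}$, an at-most discrete subset of $l_{\tau'}$, which can be avoided by a small perturbation of $d'$ in the spirit of \cite{Bang, DaiZhang}. The initial-curve construction inside the proof of Theorem~\ref{localexistence} then extends the solution to $\{d' - \epsilon \leq u \leq d'\}$, proving $d' - \epsilon$ is admissible.

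For closedness, let admissible levels $d_n \searrow d_\star > 0$. Lemma~\ref{apriori} bounds $p$ uniformly (hence $c(p)$, $(c^2)'$, $\frak{h}$); Lemma~\ref{apest} gives $|\partial_\pm R|, |\partial_\pm S| \leq C/d_\star$ on $\{u \geq d_\star\}$ together with $L^\infty$ bounds on $R, S$ via the maximum-principle statement and the transport formulas \eqref{R_positive}--\eqref{S_positive}; equations \eqref{utheta}--\eqref{ur} then control $\nabla u$ on the same set. Arzel\`a--Ascoli yields a $C^1$ subsequential limit on $D_{\tau_\star}$, so $d_\star$ is admissible. Combined with openness, this shows every $d \in (0, d_m]$ is admissible.

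For item~(2), the curves $\Gamma_{12}$ and $\Gamma_{23}$ are themselves characteristics of the positive and negative families, so a same-family characteristic inside $D_\tau$ cannot cross either of them in the $C^1$ regime. By Lemma~\ref{Lemma_decreasing/increasing} the positive characteristics are concave and the negative characteristics convex in $(\xi, \eta)$-coordinates with monotone slopes, so a positive characteristic issued from $\Gamma_{23}$ stays inside $D_\tau$ until it meets $l_\tau$, and symmetrically for a negative characteristic issued from $\Gamma_{12}$. The principal technical obstacle will be the openness step, namely establishing and maintaining non-characteristicity of the positive family on the moving front $l_{\tau'}$ as $d' \to 0$, which is where the level-perturbation idea borrowed from \cite{Bang, DaiZhang} becomes essential.
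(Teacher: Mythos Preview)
Your overall strategy matches the paper's: an open--closed continuation in the level parameter $d$, with openness coming from the local existence machinery of Theorem~\ref{localexistence} and closedness from compactness. The paper phrases this as ``suppose $\inf L = d^*>0$ and derive a contradiction,'' which is equivalent.

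There is, however, a genuine gap in your closedness step. You invoke Lemma~\ref{apest} to get $|\partial_\pm R|,|\partial_\pm S|\le C/d_\star$ on $\{u\ge d_\star\}$ and then say ``Arzel\`a--Ascoli yields a $C^1$ subsequential limit.'' But $L^\infty$ bounds on first derivatives only give equicontinuity of $U$ itself, hence $C^{0,1}$ compactness; they do \emph{not} give equicontinuity of $\nabla U$, so you cannot conclude that the limit is $C^1$ on $D_{\tau_\star}$, nor that $\tau_\star\in C^1$. The paper closes this gap by invoking a genuine H\"older modulus: it records $\|U\|_{C^{1,\beta}(D^*\setminus l^*)}\le c_1$ (citing Theorems~\ref{localexistence}, \ref{regularity} and the preceding lemmas), then compactly embeds $C^{1,\beta}\hookrightarrow C^{1,\alpha}$ to obtain $U\in C^{1,\alpha}$ and $\tau^*\in C^{1,\alpha}$ up to $l^*$. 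You need some second-derivative or $C^{1,\alpha}$ control away from the sonic line to make your Arzel\`a--Ascoli step work; Lemma~\ref{apest} alone does not supply it.

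On the non-characteristic issue for the positive family, your instinct is right that this is the delicate point, but the assertion that the tangency locus $\{2r^2\sqrt{u}=(c^2)'Rc\}$ is ``at most discrete'' on $l_{\tau'}$ is unjustified as written; nothing you have cited rules out an arc of tangency. The paper takes a different route: it records the quantitative bound $|u_\theta+\lambda_\pm u_r|\ge c_0>0$ on $D^*\setminus l^*$ (again attributed to the earlier lemmas and regularity theorem), passes to the limit to get $(\tau^*)'\neq\lambda_\pm$ on $l^*$, and then invokes the initial--boundary local existence with $l^*$ as the non-characteristic initial curve. If you want to stick with a perturbation-of-level argument in the spirit of \cite{Bang,DaiZhang}, you will need to actually supply the Sard-type or transversality reasoning that makes ``avoid by perturbing $d'$'' rigorous.
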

\begin{proof}
Let $L$ be the set where $d\in L$ satisfying the assertions (1)-(2) in this lemma.
Since the system for $W=(p, R, S)$ is equivalent to the new system for $U=(u,R, S)$, the local existence result from the system of $W=(p, R,S)$ ensures the existence of $d_0\in L$ (that is $L\neq \emptyset$) and $[d_0, d_m]\in L$ (since $u_\theta<0$).
Hence we only need to show that $\inf L=0$ to establish the claim.
We show the claim by contradiction. Suppose that $\inf L =d^*>0$. 
The proof consists of two main steps.
By extracting a limit to first show that $d^*\in L$, and then show that this $d^*$ violates the infimum assertion.

Since $d^*$ is assumed to be the infimum of $L$, there exists a monotone decreasing sequence $\{d_n\}\subset L$ satisfying $\lim_{n \rightarrow \infty} d_n = d^*$. Then for each $d_n$, there exists $\tau_n=\tau_n(\theta)$ which satisfies the assertions (1)-(2) where $l_n$ is enclosed in the domain bounded by $\Gamma_{12}$ and $\Gamma_{23}$, that is $l_n=\{(\tau_n(\theta), \theta), \theta^n_{23}\le \theta\le \theta^n_{12} \}$ where $l_n\cap \Gamma_{12}=(r^{n}_{12},\theta^n_{12})$ and $l_n\cap \Gamma_{23}=(r^n_{23}, \theta^n_{23})$. We let $D_n$ be the domain enclosed by $l_n$, $\Gamma_{12}$ and $\Gamma_{23}$.
The uniqueness of the local existence results ensure the monotonicity of $D_n$ such that $D_n\subset D_{n+1}$. 
Thus we let $l^*$ be the graph of $\tau^*(\theta)$ where $\lim_{n \rightarrow \infty} \tau_n(\theta)= \tau^*(\theta)$, for all $\theta^*_{23}\le \theta\le \theta^*_{12}$.  Let $D^*$ be the closed region enclosed by $l^*$, $\Gamma_{12}$ and $\Gamma_{23}$.

Hence in $D^{*}\setminus l^*$, there exists the solution $U=(u,R,S)$ satisfying (due to Lemmas 5.1-5.6 and Theorems 6.1, 6.2)
\begin{eqnarray}
d^* \le u\le c^2(p_1)-c^2(p_4),\\
|u_\theta + \lambda_{\pm} u_r| \ge c_0>0,\\
u_\theta \le -c_0\\
\|U\|_{C^{1,\beta}(D^*\setminus l^*)}\le c_1,
\end{eqnarray}
where $c_0, c_1>0$ depends only on $p_1,p_4$ and $d^*$.
Since the tangential derivative of $u$ along $l_\tau$ is
\begin{eqnarray}
u_\theta + \tau' u_r=0.
\end{eqnarray}
we now obtain 
$\|\tau\|_{C^{1,\beta}}\le C.$
By using the standard compactness argument we have $\|\tau\|_{C^{1,\alpha}}\le C_1,$ for any $0<\alpha<\beta$.
Repeating a similar compactness argument we have $U=(u,R,S)\in C^{1,\alpha}$ with the uniform bound established from the regularity results. Thus we extend $U$ in $D^*$ to satisfy the governing system.

Now observe that $l^*$ is the constant level set of $u$, and thus
\begin{equation}\label{lV}
 u_\theta + (\tau^*)'(\theta) u_r=0, \quad {\rm on } \quad l^*.
 \end{equation}
 On the other hand 
 \begin{equation}
 u_\theta  + \lambda_{\pm} u_r \neq 0\quad {\rm on } \quad l^*.
 \end{equation}
 Thus 
 \begin{equation}
  (\tau^*)'(\theta)\neq  \lambda_{\pm},\quad {\rm on } \quad l^*.
 \end{equation}
 Therefore $d^*\in L$.
 
 Next we show that there exists $\eps>0$ small such that $d^*-\eps\in L$ to contradict $\inf L = d^*$.
 Since we have shown that $d^* \in L$, by the local existence result, we extend the solution in $C^1$ by solving the initial boundary value problems where the initial value on $l^*$ satisfies the compatibility conditions on $\Gamma_{23}\cap l^*$ and $\Gamma_{12}\cap l^*$. 
 Furthermore the solution is unique (since the data is prescribed uniquely) and $u_\theta<0$, the corresponding solution determines the level curve where $u=d^*-\eps>0$ for some small $\eps>0$ satisfying the assertions (1)-(2) in this lemma.
 Hence repeating the same argument as to $d^*$ we can show $d^*-\eps\in L$, which is a contradiction.
 
 Therefore $\inf L =0$ and this completes the proof.
\end{proof}
%

Lemma~\ref{exlemma} implies the existence of the sonic boundary $\sigma$ where $R=S$. Due to the monotonicity properties of the characteristics and the solution, the sonic boundary $\sigma$ is connected, bounded and will not form a closed loop.
We now check whether this sonic boundary satisfies the compatibility conditions at $\Xi_1$ and $\Xi_3$.

%

We first consider $\Xi_3.$
%
%
If $\sigma$ meets at $X_0\in\Gamma_{23}$ where $X_0\neq \Xi_3$.
This then violates the data at $X_0$ since $S\neq R$ on $\Gamma_{23}$.
So if $\sigma$ meets somewhere on $\Gamma_{23}$ it must be only at $\Xi_3$.
Now if $\sigma$ does not intersect with $\Gamma_{23}$, that is the sonic boundary is terminated before it reaches to $\Gamma_{23}$ and $dist(\overline\sigma,\Gamma_{23})=\delta>0$. 
In that case, since $R$ and $S$ must be positive in the region $\Rone$, we have $R=S>0$ on $\overline\sigma$.
In particular we can find the positive characteristic $\Gamma_+^\delta$ connecting the boundary point, denoted by $X_1$, of $\overline\sigma$, and $\Gamma_{23}$.
Hence we can treat this as a new boundary value problem where the boundaries consist of $\Gamma_+^{\delta}$ and a segment, denoted by $\Gamma_{23}^\delta$, of $\Gamma_{23}$ from $\Xi_3$ to $Y_1=\Gamma_+^\delta \cap \Gamma_{23}$. 
We note that the data on $\Gamma_+^\delta$ is now prescribed with the given data on $\Gamma_{23}\setminus \Gamma_{23}^\delta$.
 
 Note that $R=S>0$ on $\overline\sigma$. Thus we may write $R(X_1) = S(X_1)=q_0>0$ and $\tau'\ge \eps_0>0$ for some constants $q_0, \eps_0$ (however note that these constants $q_0, \eps_0$ depend on the choice of the data $g$ on $\Gamma_{23}$). Hence we find 
$0<\delta_1< \delta_0$ and $\eps_1$ such that 
we can extend $\tau'(\theta) \ge \eps_1>0$, where $\eps_1<\eps_0$,  in a small neighborhood of $X_1$, denote $B_{\delta_1}(X_1)$ to be the circle centered at $X_1$ with radius $\delta_1$.
We now find the positive characteristics $\Gamma_{+}^{\delta_2}$ connecting $Y_2\in\Gamma_{23}$ and 
$\partial B_{\delta_1/2}(X_1)$, see Figure~\ref{construction}. 
If there exists $\delta_1$ such that $B_{\delta_1}(X_1)$ covers the entire neighborhood of $\Xi_3$ then our choice of $g$ on the data to prescribe $\Gamma_{23}$ needs to be adjusted (scale down the strength on $S$).

We next construct the solution by solving the initial and boundary value problems in the regions \circled{1} and \circled{2} enclosed by $\Gamma_+^{\delta_1}$, $\Gamma_{-}^{\delta_1}$, $\Gamma_+^{\delta_2}$, and $\Gamma_{23}$, respectively, so that we find the sonic boundary $\sigma[\delta_1]$, where 
$ R=S\ge q_1>0$ on $\sigma[\delta_1]$ for some constant $q_1\le q_0$. 

Repeat this process by finding $\eps_n\rightarrow 0$ to construct $\sigma[\delta_n]$ for each $\eps_n>0$ and find sequences $\{X_n\}$, $\{Y_n\}$, 
and $q_n$, where $R=S\ge q_n$ on $\sigma[\delta_n]$.
By the construction (these sequences are monotone and bounded) we know that there exist limits for these sequences. Let $X_*, Y_*, q_*$ be the limits of these sequences respectively. 
Note that the tangential derivative on $r^2 -c^2(p)$ along $\sigma$ is
\begin{eqnarray*}
(c^2)'(p_\theta +\tau' p_r) = 2r \tau'
\end{eqnarray*}
which implies $R=S=p_\theta$ on $\sigma$ and 
\begin{eqnarray*}
p_\theta =q_n \rightarrow q_*=0,
\end{eqnarray*}
as $\eps_n \rightarrow 0$.

Hence we are left to check whether $X_*=Y_*$. 

Suppose $ X_*\neq Y_*$. 
We have two possibilities either (1) $X_*$ surpasses below $\Gamma_{23}$, 
or (2) the sequence $\{X_n\}$ is terminated before it reaches to $\Gamma_{23}$. 
(We may treat this as a shooting method, that is, case (1) is overshoot, and case (2) is undershoot).
We note however by the construction (we have adjusted the data $g$ so that $\delta_i$ can be chosen appropriately), $X_*$ should not be located below $\Gamma_{23}$.
Hence the sonic boundary constructed by this sequence is then terminated before it reaches $\Gamma_{23}$.
In other words we need to readjust the choice of the data $g$. 
Therefore there exists data $g$ such that the limits of $X_n$ and $Y_n$ match, and denote the limit by $\Xi_3$. 

For the case $\Xi_1$.
As before, since $R>S=0$ on $\Gamma_{12}$, if $\sigma$ meets somewhere on $\Gamma_{12}$, it must be at $\Xi_1$.
Now if $\sigma$ does not intersect with $\Gamma_{12}$, as we did for $\Xi_3$, we formulate a boundary value problem where the boundaries are now with the corresponding characteristics $\Gamma_-^\delta$ and $\Gamma_{12}^\delta$. Repeating a similar argument as we did for $\Xi_3$ and noting that the data on $\Gamma_{12}$ is given (independent of the data on $\Gamma_{23}$) while the data on $\Gamma_-^\delta$ depends on the data on $\Gamma_{23}$ we find the correct data so that it matches the compatibility condition at $\Xi_1$.

Note that at $\Xi_3$ both datum on $\Gamma_+^\delta$ and $\Gamma_{23}^\delta$ depend on the choice of $g$, while at $\Xi_1$ it is only on $\Gamma_-^\delta$ that depends on $g$. 

Therefore we establish the following lemma.
\begin{lemma}\label{excorners}
There exist $f\in C^2(\theta_2,\theta_3)$ and $g\in C^2(\Gamma_{23})$ where $\Gamma_{23} =\{(f(\theta),\theta): \theta_2\le \theta \le \theta_3\}$, satisfying
\eqref{G23} -- \eqref{RXi2} and [G1.] with $\theta_2 <\theta_3<\pi/2$, such that
the Goursat boundary value problem has the solution $(p, R, S)\in C^1(\Rone)\cap C^0(\Rone \cup\{\Xi_1, \Xi_3\})$ 
that satisfies the following:
\begin{enumerate}
\item
there exists the sonic boundary $\sigma =\{(\tau(\theta),\theta), \tau'>0 \}$ 
where $R=S>0$ on $\sigma$.
\item
$\sigma$ is terminated at $\Xi_3=(f(\theta_3),\theta_3)$ with $\lim_{\theta \rightarrow \theta_3} \tau'(\theta)=0$. That is $\Xi_3=\overline \sigma\cap \overline \Gamma_{23}$.
\item
$\sigma\in C^1$
\item 
$R(\Xi)=S(\Xi) \rightarrow 0$ as  $\Xi \in \sigma \rightarrow  \Xi_k$, $k=1,3$.
\end{enumerate}
\end{lemma}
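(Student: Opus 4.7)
The plan is to combine the level-curve construction of Lemma~\ref{exlemma} with a limit $d \to 0^+$ to produce the candidate sonic curve $\sigma$, then use a shooting-style selection of the admissible data $g$ on $\Gamma_{23}$ to force $\overline\sigma$ to close up at the corners $\Xi_1$ and $\Xi_3$, and finally invoke Theorem~\ref{regularity} to upgrade $\sigma$ to $C^1$. Concretely, first I would apply Lemma~\ref{exlemma} to obtain a nested family of $C^1$ solutions $(u,R,S)$ on the domains $D_{\tau_d}$ bounded by $\Gamma_{12}$, $\Gamma_{23}$, and the level curves $l_{\tau_d}=\{u=d\}$ for each $d\in(0,d_m]$. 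Because $u_\theta = -\tfrac12 (c^2)'(R+S)<0$ by Lemma~\ref{Lemma1_increasing}, these level curves move monotonically as $d\searrow 0$, and I would define $\sigma=\{(\tau(\theta),\theta)\}$ as the pointwise limit. The tangential condition $u_\theta+\tau' u_r=0$ combined with \eqref{utheta}--\eqref{ur} gives the identity $(c^2)'(p)\,R = 2r\,\tau'$ on $\sigma$, which simultaneously delivers $R=S=p_\theta>0$ in the interior of $\sigma$ and $\tau'(\theta)>0$ (consistent with Lemma~\ref{intersectionpoint}); the monotonicity results of Lemmas~\ref{Lemma_decreasing/increasing}--\ref{pointxi3} ensure $\sigma$ is simple and does not close into a loop.

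The main obstacle is showing that $\overline\sigma$ closes up at exactly $\Xi_1$ and $\Xi_3$, rather than landing elsewhere on the characteristic boundaries or terminating inside $\Rone$. At $\Xi_3$, any intersection of $\overline\sigma$ with $\Gamma_{23}$ must occur at $\Xi_3$ itself, since $R\neq S$ along $\Gamma_{23}\setminus\{\Xi_3\}$ while $R=S$ on $\sigma$, and the compatibility condition [G1.] guarantees $R=S=0$ precisely at $\Xi_3$. The delicate case is when $\sigma$ terminates at an interior point $X_1$ with $R=S=q_0>0$. I would then run the iterative construction sketched in the paragraphs preceding the lemma: draw a small positive characteristic $\Gamma_+^{\delta_n}$ from a point near $X_1$ down to $Y_n\in\Gamma_{23}$, solve a mixed initial-boundary value problem on the wedge using Theorem~\ref{localexistence}, extend $\sigma$ by a segment $\sigma[\delta_n]$, and iterate. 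The bounded monotone sequences $\{X_n\},\{Y_n\},\{q_n\}$ converge to $X_*,Y_*,q_*$ with $q_*=0$ (forced by $\tau'\to 0$). I would treat the strength of $S$ on $\Gamma_{23}$ as a shooting parameter within the admissible family $g$: overshoot ($X_*$ strictly below $\Gamma_{23}$) is prevented by the monotone behavior of $u$ and the choice of $\delta_n$, undershoot corresponds to $X_*\neq Y_*$, and an intermediate-value argument in this parameter produces the match $X_*=Y_*=\Xi_3$. A symmetric argument at $\Xi_1$ uses the $\Gamma_-^\delta$ family; the two matches are decoupled because the data on $\Gamma_{12}$ is fixed independently of $g$ while only the data on $\Gamma_-^\delta$ depends on $g$.

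The remaining conclusions are then short. The $C^1$ regularity of $\sigma$ away from the corners follows from Theorem~\ref{regularity}: uniform continuity of $(R-S)/t$ gives uniform continuity of $p_r$, and the identity $(c^2)'(p)\,R=2r\,\tau'$ promotes $\tau'$ to a continuous function so that $\tau\in C^1$. The vanishing $R=S\to 0$ at $\Xi_1$ and $\Xi_3$ follows from the same identity together with $\tau'(\theta)\to 0$ at the endpoints, which at $\Xi_3$ is the compatibility [G1.] and at $\Xi_1$ follows from $\lambda\to 0$ along $\Gamma_{12}$ as $\theta\to\pi/2$.
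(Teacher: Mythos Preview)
Your proposal follows essentially the same strategy as the paper: construct $\sigma$ as the $d\to 0$ limit of the level curves from Lemma~\ref{exlemma}, rule out $\overline\sigma$ meeting $\Gamma_{23}$ (resp.\ $\Gamma_{12}$) anywhere but $\Xi_3$ (resp.\ $\Xi_1$) by the $R\neq S$ boundary data there, handle early termination by iterating mixed initial--boundary value problems along $\Gamma_+^{\delta_n}$ with a shooting adjustment of the strength of $S$ in $g$, and read off the $C^1$ regularity of $\sigma$ from Theorem~\ref{regularity}. One minor correction: the tangential identity on $\sigma$ is $(c^2)'(p_\theta+\tau' p_r)=2r\tau'$ rather than $(c^2)'R=2r\tau'$; the extra $p_r$ term is harmless once $2r-(c^2)'p_r>0$ on $\sigma$ (cf.\ the end of the proof of Lemma~\ref{intersectionpoint}), so your qualitative conclusions are unaffected.
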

\begin{figure}[ht]
\psfrag{1}[][][0.7][0]{$X_1$}
\psfrag{2}[][][0.7][0]{$\Gamma^{\delta_1}_{+}$}
\psfrag{3}[][][0.7][0]{$Y_1$}
\psfrag{4}[][][0.7][0]{$Y_3$}
\psfrag{5}[][][0.7][0]{$\Xi_3$}
\psfrag{6}[][][0.7][0]{$X_3$}
\psfrag{7}[][][0.7][0]{$\circled{2}$}
\psfrag{8}[][][0.7][0]{$\circled{1}$}
\psfrag{9}[][][0.7][0]{$\Gamma^{\delta_3}_{+}$}
\psfrag{10}[][][0.7][0]{$X_2$}
\psfrag{11}[][][0.7][0]{$B_{\delta_1}(X_1)$}
\psfrag{12}[][][0.7][0]{$\delta_1$}
\psfrag{13}[][][0.7][0]{$Y_2$}
\psfrag{14}[][][0.7][0]{$\Gamma_{23}$}
\psfrag{15}[][][0.7][0]{$\delta_1/2$}
\psfrag{16}[][][0.7][0]{$\Gamma^{\delta_2}_{+}$}
\psfrag{17}[][][0.7][0]{$\delta_2$}
\psfrag{18}[][][0.7][0]{$B_{\delta_2}(X_2)$}
\begin{center}
\includegraphics[height = 2.5in,width = 3in]{Construction2.eps}
\end{center}
\vspace*{-4mm}
\caption{Schematics of constructing the solution near $\Xi_3.$}
\label{construction}
\end{figure}

Therefore we finally establish the existence result in the entire transient wave region $\Rone$.
Lemma~\ref{excorners} provides the schematics, as illustrated in Figure~\ref{construction}, that is, how to construct the solution near $\Xi_1$ and $\Xi_3$ by selecting the correct data on $\Gamma_{23}$ to hold the compatibility conditions at $\Xi_1$ and $\Xi_3$. Furthermore  Lemmas~\ref{exlemma}, \ref{excorners} utilize the properties of the tricomi type degeneracy, that is, the sonic boundary cannot be a characteristic curve, and as a consequence $R=S=p_\theta$ remain strictly positive. 
Hence the natural compatibility condition where the different types of boundaries meet is $R=S=0$ which gives rise to an additional degeneracy. 

\section{Simple wave in region $\mathcal{R}_2$}
\label{sectionR_2}
We are now left with the simple wave region $\Rtwo$.
We proceed to derive certain identities that may prove useful. Note that from $$\frac{dr_+}{d\theta}=\lambda=r_+\sqrt{\frac{r^2_+ -c^2}{c^2}},$$ we can write
\begin{align*}
 c^2(p)=\gamma p^{\kappa}= \frac{r^4_+}{r '^2_++r^2_+},
\end{align*}
which implies
\begin{align*}
r^2_+-\gamma p^{\kappa}&=\frac{(r'_+r_+)^2}{r '^2_++r^2_+}.
\end{align*}
Differentiating along the positive characteristics $r=r_+$ we have
\begin{align*}
\partial_{+}(\gamma p^{\kappa})&=\frac{2r^3_+r'_+[r^2_++2r'^2_+-r_+r''_+]}{(r'^2_++r^2_+)^2}, 
\end{align*}
and consequently 
\begin{align}
R=\partial_{+}p&=\frac{2r^3_+r'_+p^{1/\gamma}}{\gamma \kappa}\left[\frac{r^2_++2r'^2_+-r_+r''_+}{(r'^2_++r^2_+)^2}\right]. \label{R1}
\end{align}
Similarly from $$\frac{dr_-}{d\theta}=- \lambda=-r_-\sqrt{\frac{r^2_- -c^2}{c^2}},$$ we have
\begin{align}
S=\partial_{-}p&=\frac{2r^3_-(-r'_-)p^{1/\gamma}}{\gamma \kappa}\left[\frac{r_-r''_--r^2_--2r'^2_-}{(r'^2_-+r^2_-)^2}\right]. \label{S1}
\end{align}
In the simple wave region $\mathcal{R}_2$, by following Lemma~\ref{Lemma1_increasing}, only $S=\partial_-p>0$ transfers the data across $\Gamma_{23}$, while the other family $R=\partial_+p$ becomes zero.

Hence by using $r''_+=r_++\dfrac{2r'^2_+}{r_+}$ from \eqref{R1} with initial values $r_+(\theta_0)=r_0, r'_+(\theta_0)=r_0\sqrt{\frac{r_0^2-c^2(p_0)}{c^2(p_0)}},$
prescribed on $\Gamma_{23}$,
we can find the equation of the positive characteristic, $\Gamma_+=\{(r_+(\theta),\theta)\}$,
\begin{align}
r_+(\theta)=\frac{r_0}{\left(\sin \theta_0-\cos \theta_0\sqrt{\frac{r_0^2-c^2(p_0)}{c^2(p_0)}}\right)\sin \theta+\left(\sin \theta_0\sqrt{\frac{r_0^2-c^2(p_0)}{c^2(p_0)}}+\cos\theta_0\right)\cos \theta}. \label{3}
\end{align}
Solving for $c^2(p_0)$ gives
\begin{eqnarray}\nonumber
c^2(p_0)&=&\frac{r_+^2r_0^2[\sin^2\theta+\sin^2 \theta_0-2\sin^2\theta_0\sin^2\theta-2\sin\theta\cos\theta\sin\theta_0\cos\theta_0]}{r^2+r_0^2-2rr_0\sin\theta\sin\theta_0-2rr_0\cos\theta\cos\theta_0}\\
& = &\frac{(\eta\xi_0-\xi \eta_0)^2}{(\eta-\eta_0)^2+(\xi-\xi_0)^2}, \label{4}
\end{eqnarray}
where $\xi_0=r_0\cos \theta_0, \ \ \eta_0=r_0 \sin \theta_0.$

We further find the positive characteristic emanating from $\Xi_2$, denoted by $\Gamma_{24}$, by integrating 
\begin{align*}
\frac{dr}{d\theta}=r\sqrt{\frac{r^2-c^2(p_4)}{c^2(p_4)}},
\end{align*}
and thus
$$\Gamma_{24}: \ \ r=c(p_4)\sec\bigg(\theta+\arcsec\sqrt{\frac{c(p_1)}{c(p_4)}}-\arcsin\sqrt{\frac{c(p_4)}{c(p_1)}}\bigg).$$
Figure~\ref{shock} depicts the envelop formation by the simple wave \eqref{3}.
\begin{figure}[ht]
\psfrag{2}[][][0.7][0]{$\Gamma_{12}$}
\psfrag{1}[][][0.7][0]{$\Xi_1$}
\psfrag{15}[][][0.7][0]{$\eta=c(p_1)$}
\psfrag{16}[][][0.7][0]{$\eta=c(p_4)$}
\psfrag{3}[][][0.7][0]{$\Xi_2$}
\psfrag{5}[][][0.7][0]{$\Xi_3$}
\psfrag{6}[][][0.7][0]{$\sigma$}
\psfrag{7}[][][0.7][0]{$\mathcal{R}_0$}
\psfrag{11}[][][0.7][0]{$\mathcal{R}_2$}
\psfrag{12}[][][0.7][0]{$\mathcal{R}_1$}
\psfrag{8}[][][0.7][0]{$\Xi_4$}
\psfrag{10}[][][0.7][0]{$\Gamma_{24}$}
\psfrag{9}[][][0.7][0]{$\Sigma'$}
\psfrag{4}[][][0.7][0]{$\Gamma_{23}$}
\begin{center}
\includegraphics[height = 2.9in,width = 2in]{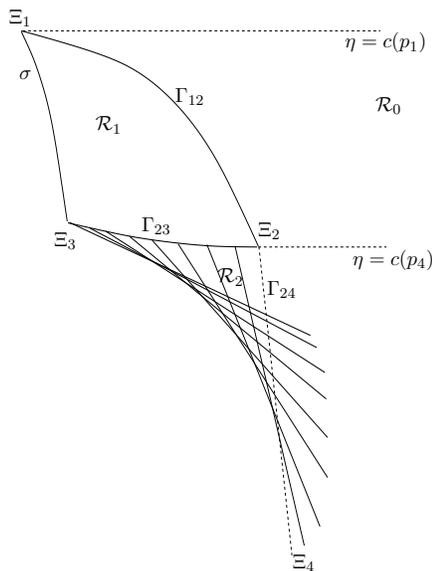}
\end{center}
\vspace*{-4mm}
\caption{Envelope formation by the simple wave.}
\label{shock}
\end{figure}

\section{Numerical Results}\label{numerics}
We conclude our paper by presenting the numerical results for the configuration. The results are produced by using the Riemann data $\rho_1=0.5$, $\rho_2=0.25$,
and $\gamma =3$. The computational domain that we have implemented is $10^{-2} \le r\le 1$ and $0\le \theta \le 3\pi/2$ where $(r,\theta)$ are polar coordinates, with mesh sizes $dr = 1/2400 \approx 4.1667\times10^{-4}$ and $d\theta = 2\pi/3600 \approx 1.7\times10^{-3}$, with the final time $T=1$.

These results are produced by using CLAWPACK \cite{Clawpack}.
We implement Roe average methods \cite{Roe} and finite volume methods on quadrilateral grids \cite{Clawpack}. 
More precisely, we implement Roe average methods in a uniform grid in polar coordinates as our computational domain, together with a coordinate mapping and 
appropriate scaling of the flux differences. 
The scaling is done by using the area ratio ``capacity'' of the computational cell which is determined by 
the size of the corresponding physical cell \cite{LeVeque1}.

\begin{figure}[ht]
\begin{center}
\includegraphics[height = 4in,width = 4in]{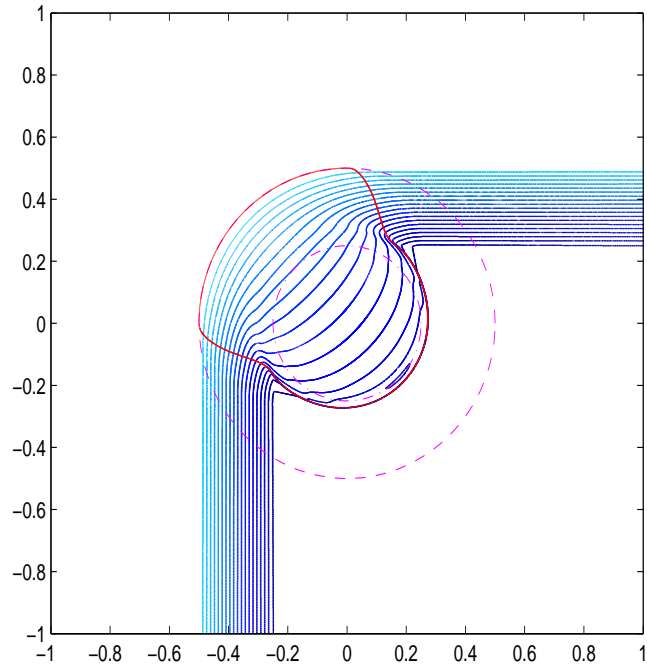}
\end{center}
\vspace*{-4mm}
\caption{Density plots: the contour plot of $\rho$.}
\label{fig_den}
\end{figure}

\begin{figure}[ht]
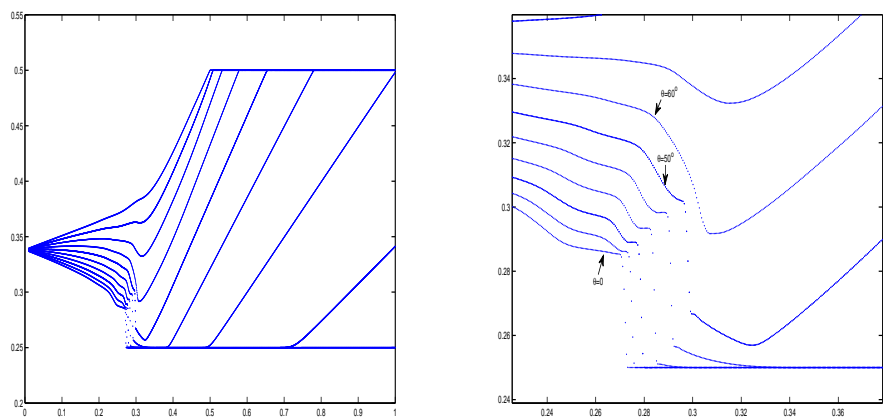

\begin{center}
\includegraphics[height = 2.5in,width = 2.5in]{nlwecasegradial.eps}
\includegraphics[height = 2.5in,width = 2.5in]{nlwecasegradialzoomin.eps}
\end{center}
\vspace*{-4mm}
\caption{Density plots: Left figure is the cross section in the radial direction for a fixed angle $\theta$ ranging from $0$ to $\pi/2$ and incrementing by $10$ degrees. Right figure is the enlargement of the left figure near which the shock changes to sonic, which appears to be in-between the angle $50$ and $60$ degrees in this configuration.  }
\label{fig_cross}
\end{figure}
In Figure~\ref{fig_cross}: the right figure is the enlargement from the left figure near which the shock appears. In the right figure, the density flattens out near the shock, while there exists a compression (a dip in the cross section) which merges to the shock.
The numeric suggests that the angle of the location of $\Xi_3$ where the sonic boundary and the shock boundary meet is between $50$ and $60$ degrees in this configuration. 

\section*{Acknowledgments} 
The first author is grateful to Yuxi Zheng and Kyungwoo Song for helpful discussions on the Pressure gradient system. The first author is also thankful to John Hunter for a discussion on Tricomi problems.

\end{document}